\newtheorem{theorem}{Theorem}[section]
\theoremstyle{definition}
\newtheorem{lemma}[theorem]{Lemma}
\newtheorem{definition}[theorem]{Definition}
\newtheorem{remark}[theorem]{Remark}
\newtheorem{conjecture}[theorem]{Conjecture}
\newtheorem{problem}{Problem}
\newcommand{\rvline}{\hspace*{-\arraycolsep}\vline\hspace*{-\arraycolsep}}
\def\BN{\mathbbm N}
\def\BZ{\mathbbm Z}
\def\BR{\mathbbm R}
\def\BC{\mathbbm C}
\def\calT{\mathcal T}
\def\SL{\mathrm{SL}}
\def\th{\theta}
\def\PSL{\mathrm{PSL}}
\def\PGL{\mathrm{PGL}}
\def\be{\begin{equation}}
\def\ee{\end{equation}}
\def\bea{\begin{equation*}}
\def\eea{\end{equation*}}
\def\G{\mathrm{G}}
\def\geom{\mathrm{geom}}
\def\Sym{\mathrm{Sym}}
\def\osp{\mathrm{OSp}}
\def\SM[#1,#2]{
\begin{pmatrix}
1 & #1 & \rvline & #2 \\
0 & 1 &  \rvline & 0 \\ \hline
0 & -#2 &  \rvline & 1
\end{pmatrix}
}
\def\LM[#1]{
\begin{pmatrix}
0 & -#1^{-1} & \rvline & 0 \\
#1 & 0 &  \rvline & 0 \\ \hline
0 & 0 &  \rvline & 1
\end{pmatrix}
}
\newcommand{\cT}{{\mathcal T}}
\newcommand{\Tdot}{\mathring{\cT}}
\newcommand{\Mdott}{\widetilde{M}}
\definecolor{codegreen}{rgb}{0,0.6,0}
\definecolor{codegray}{rgb}{0.5,0.5,0.5}
\definecolor{codepurple}{rgb}{0.58,0,0.82}
\definecolor{backcolour}{rgb}{0.95,0.95,0.92}
\lstdefinelanguage{PARIGP}{
  keywords={typeof, new, true, false, catch, function, return, null, catch, switch,
    var, if, in, while, do, else, case, break},
  keywordstyle=\color{blue}\bfseries,
  ndkeywords={class, export, boolean, throw, implements, import, this},
  ndkeywordstyle=\color{darkgray}\bfseries,
  identifierstyle=\color{black},
  sensitive=false,
  comment=[l]{//},
  morecomment=[s]{/*}{*/},
  commentstyle=\color{purple}\ttfamily,
  stringstyle=\color{red}\ttfamily,
  morestring=[b]',
  morestring=[b]"
}
\lstdefinestyle{code}{
    backgroundcolor=\color{backcolour},   
    commentstyle=\color{codegreen},
    keywordstyle=\color{magenta},
    numberstyle=\tiny\color{codegray},
    stringstyle=\color{codepurple},
    basicstyle=\ttfamily\footnotesize,
    breakatwhitespace=false,         
    breaklines=true,                 
    captionpos=b,                    
    keepspaces=true,                 
    numbers=left,                    
    numbersep=5pt,                  
    showspaces=false,                
    showstringspaces=false,
    showtabs=false,                  
    tabsize=2
}
\begin{document}
\title[Super-representations of 3-manifolds and torsion polynomials]{
  Super-representations of 3-manifolds and torsion polynomials}
\author{Stavros Garoufalidis}
\address{% Max Planck Institute for Mathematics \\
% Vivatsgasse 7, 53111 Bonn, GERMANY \newline
International Center for Mathematics, Department of Mathematics \\
Southern University of Science and Technology \\
Shenzhen, China \newline
{\tt \url{http://people.mpim-bonn.mpg.de/stavros}}}
\email{stavros@mpim-bonn.mpg.de}

\author{Seokbeom Yoon}
\address{International Center for Mathematics, \\
Southern University of Science and Technology \\
Shenzhen, China \newline
{\tt \url{http://sites.google.com/view/seokbeom}}}
\email{sbyoon15@gmail.com}

\keywords{super-Teichmuller space, 3-manifolds, hyperbolic geometry, Thurston,
  representations, torsion polynomials, Thurston norm, genus, ideal triangulations,
  face matrices, Neumann-Zagier matrices, super-ptolemy variables, super-shapes,
  ortho-symplectic super-group.}

%\thanks{
%}

\date{12 March 2024}%\today}
%\dedicatory{}

\begin{abstract}
  Torsion polynomials connect the genus of a hyperbolic knot (a topological
  invariant) with the discrete faithful representation (a geometric invariant).
  Using a new combinatorial structure of an ideal triangulation
  of a 3-manifold that involves edges as well as faces, we associate a polynomial
  to a cusped hyperbolic manifold that conjecturally agrees with the $\BC^2$-torsion
  polynomial, which conjecturally detects the genus of the knot. The new
  combinatorics is motivated by super-geometry in dimension 3, and more precisely
  by super-Ptolemy assignments of ideally triangulated 3-manifolds and their 
  $\osp_{2|1}(\BC)$-representations.
\end{abstract}

\maketitle

{\footnotesize
\tableofcontents
}

%%%%%%%%%%%%%%%%%%%%%%%%%%%%%%%%%%%%%%%%%%%%%%%%%%%%%%%%%%%%%%%%%%%%%%%%%%%%
%%%%%%%%%%%%%%%%%%%%%%%%%%%%%%%%%%%%%%%%%%%%%%%%%%%%%%%%%%%%%%%%%%%%%%%%%%%%

\section{Introduction}
\label{sec.intro}

\subsection{Overview}

A well-known topic in geometry and topology is the study of representations of
surface groups into simple Lie groups. Recently, this topic has been extended by
replacing simple Lie groups (such as $\SL_2(\BC)$) with super-Lie groups, and
most notably by the orthosymplectic group $\osp_{2|1}(\BC)$.
These representations have been studied by at least three different points of view,
namely as character varieties, as cluster algebras, and as super-Teichm\"uller space;
see for instance~\cite{Penner, Ip-Penner, Musiker:matrix, Shemyakova} as well
as~\cite{Witten}.

All this is about surfaces. In our paper we extend this study in the context of
3-manifolds equipped with an ideal triangulation. Explicitly,
\begin{itemize}
\item[(a)]
  We introduce super-Ptolemy coordinates of 3-dimensional triangulations and prove
  that they parametrize $\osp_{2|1}(\BC)$-representations of fundamental groups of
  3-manifolds; see Section~\ref{sec.super}.
\item[(b)] 
  Using such representations, we define a 1-loop polynomial and show that it is 
  a topological invariant; see Theorem~\ref{thm.23move}.
\item[(c)]
  We show that an $\SL_2(\BC)$-representation lifts to an
  $\osp_{2|1}(\BC)$-representation if and only if the 1-loop polynomial, evaluated
  at $t=1$, vanishes; see Theorem~\ref{thm.lift}.
\item[(d)]
  We conjecture that our 1-loop polynomial coincides with the $\BC^2$-torsion
   polynomial whose degree conjecturally detects the genus of a
   knot~\cite{Dunfield:twisted}; see Conjecture~\ref{conj.dt}.
   Both polynomials have values in the trace field of  a cusped hyperbolic 3-manifold,
   and are explicitly computable. Doing so, we check our conjecture explicitly
   for the $4_1$ knot. 
\end{itemize}

\subsection{Torsion polynomials: a Thurstonian connection}

The complement $M=S^3\setminus K$ of a hyperbolic knot $K$ in 3-space has two
interesting invariants, both defined by Thurston
\begin{itemize}
\item
the genus of $K$, i.e., the least genus of all embedded spanning surfaces of $K$,
generalized to the Thurston norm on $H_2(M,\partial M,\BR)$~\cite{Thurston:norm},
\item
the discrete faithful representation $\pi_1(M) \to \PSL_2(\BC)$~\cite{Thurston}.
\end{itemize}
These two invariants, one topological and another geometric, are beautifully linked
to each other via torsion polynomials revealing, to quote
Agol--Dunfield~\cite{Agol-Dunfield}, a ``remarkable Thurstonian connection between the
topology and geometry of 3-manifolds''. Torsion polynomials are twisted versions of
the Alexander polynomial, where one twists the homology of the infinite cyclic
cover of $M$ using an $\SL_2(\BC)$-lift $\rho_\geom$ of the geometric representation
of $M$, or a
symmetric power $\Sym^{n-1}(\rho_\geom)$ of it, the corresponding polynomial being
denoted by $\tau_{M,\rho_\geom, n}(t)$. These geometric invariants are Laurent
polynomials in $t$
with coefficients in the trace field of $M$, and a key feature is that their
degrees give bounds for the genus of the knot. More precisely, one has
\be
\label{genusn}
2 \cdot \text{genus}(K) -1 \geq \frac{1}{n} \deg_t \tau_{M,\rho_\geom, n}(t)
\ee
%% see: snappy/twisted-NZ/twisted-alexander-polynomials.txt
for all $n \geq 2$. 
When $n=3$ (or $n$ being odd bigger than 1), examples show that the above bound
is not sharp, but when $n=2$, it was conjectured in~\cite{Dunfield:twisted},
for reasons that are not entirely clear, and proven in several families
that the inequality in~\eqref{genusn} becomes an equality~\cite{Agol-Dunfield}.
As Agol--Dunfield state, this is a remarkable Thurstonian connection between the
topology and geometry of 3-manifolds.

% Note that the genus of a knot is typically determined by sutured-hierarchies
% introduced by Gabai~\cite{Gabai} and used in several gauge-theory versions of knot
% homologies~\cite{OS:genus,Juhasz,Kronheimer-Mrowka:knots} (see also~\cite{Li-Ye} for
% a recent review), and also determined by using piece-wise linear methods by
% Cooper--Tillmann--Worden~\cite{cooper}. 

The paper concerns two seemingly unrelated problems from 3-dimensional hyperbolic
geometry and character varieties of 3-manifold groups.
Below, $M$ denotes a cusped hyperbolic 3-manifold.
% whose solution involves some new combinatorics of ideal triangulations of 3-manifolds. 
\begin{problem}
\label{prob1}
Can one compute the $\BC^2$-torsion polynomial $\tau_{M,\rho,2}(t)$ of an
$\SL_2(\BC)$-representation $\rho$ of $ \pi_1(M)$ from an ideal triangulation $\calT$
of $M$?
\end{problem}

\begin{problem}
\label{prob2}
What is the geometric meaning of the set of $\SL_2(\BC)$-representations $\rho$ of
 $\pi_1(M)$ whose $\BC^2$-torsion $\tau_{M,\rho,2}(1)$ is vanishing?
\end{problem}

We will answer both problems by introducing super-Ptolemy assignments for $\calT$ 
(see Section~\ref{sec.super}) and defining a 1-loop polynomial, which is a topological
invariant (see Theorem~\ref{thm.23move}). The conjectural equality of the 1-loop
polynomial with the $\BC^2$-torsion polynomial (see Conjecture~\ref{conj.dt}) gives
a solution to the first problem. The vanishing of the 1-loop polynomial at $t=1$
gives a necessary and sufficient condition for an $\SL_2(\BC)$-representation of
$\pi_1(M)$ to lift to an $\osp_{2|1}(\BC)$-representation (see Theorem~\ref{thm.lift})
thus answering the second problem. 

Both the 1-loop and the $\BC^2$-torsion polynomials have coefficients
in the trace field of the representation and can be exactly computed, and doing so
we can confirm our conjecture for the $4_1$ knot. 
Moreover, the conjecture is proven in subsequent work (using a different set of ideas)
for all fibered cusped hyperbolic 3-manifolds~\cite{GY:1loop}. Aside from the
fibered case, Nathan Dunfield has confirmed the conjecture numerically
for all manifolds in the \texttt{OrientableCuspedCensus} and beyond by using
the methods  of \texttt{SnapPy}~\cite{snappy}. 

% A solution to the first problem is given in Section~\ref{sec.delta2}, which
% can be read independently using basic knowledge of 3-dimensional
% topology and with no references to super-geometry. However, this will not
% explain where the definition of the 1-loop polynomial came from,
% and will give no clue about an answer to the second problem (let alone its relation
% to the first). The solution to the second problem is also given in
% Section~\ref{sec.delta2}.

Our super-Ptolemy assignments lead to new combinatorics of ideal triangulations
beyond the well-known Neumann--Zagier matrices, the latter encoding which tetrahedra
wind around each edge. This newly found combinatorics involves linear equations
associated to faces and tetrahedra, dictated by the representations of $\pi_1(M)$
into $\osp_{2|1}(\BC)$; see Section~\ref{sub.super3} below for a detailed discussion.

\subsection{Super-geometry in dimension 3}
\label{sub.super3}

A byproduct of our paper is a solution to the following problem that advances 
recent work of many people on super-Riemann surfaces and their
coordinates into the third dimension. For detailed description of super-geometry
in dimension 2, super-Teichm\"{u}ller space and super-Riemann surfaces, we refer the
reader to~\cite{Witten,Penner,Ip-Penner, Aghaei-Teschner,Shemyakova,
  Musiker:matrix, Ovsienko}.

\begin{problem}
\label{prob3}
What are super-Ptolemy assignments of 3-dimensional triangulations?
\end{problem}

A solution to the above problem is detailed in Section~\ref{sec.super}. Here, we
highlight the important features of super-Ptolemy assignments in dimension 3, 
postponing their precise definitions, notations and properties for later.

Recall that a Ptolemy assignment is a map $c: \calT^1 \to \BC^\ast$ from the set
of oriented edges  of an ideal triangulation $\calT$ that satisfies
$c(-e)=-c(e)$ for all edges $e$ and the equation
\be
\label{eqn.c}
c_{01} c_{23} - c_{02} c_{13} + c_{03} c_{12} =0
\ee
for each tetrahedron, where $c_{ij}=c(e_{ij})$ and $e_{ij}$ is the $(i,j)$-edge of
a tetrahedron~\cite{GTZ15} (see also~\cite{Garoufalidis:ptolemy}). There are
canonical bijections between the sets of generically decorated
$(\SL_2(\BC),N_2)$-representations, solutions to the Ptolemy equations, and natural
$(\SL_2(\BC),N_2)$-cocycles of the truncated triangulation\footnote{ 
where $N_2=  \{ \begin{psmallmatrix} 1 & * \\ 0 & 1 \end{psmallmatrix} \}$
is the unipotent subgroup of $\SL_2(\BC)$} studied in detail in~\cite{GTZ15}.

Whereas a Ptolemy assignment $c$ assigns a nonzero complex number to every edge of
an ideal triangulation, a super-Ptolemy assignment is a pair of maps $(c,\theta)$
that assign  an invertible even element of a Grassmann algebra at each edge and an
odd element at each face of an ideal triangulation. Instead of Equation~\eqref{eqn.c},
a super-Ptolemy assignment satisfies one equation
\be
\label{eqn.sc}
c_{01} c_{23} - c_{02} c_{13} + c_{03} c_{12} + 
c_{01}  c_{03}c_{12}c_{13}c_{23}  \theta _0 \theta_2 = 0 
\ee
for each tetrahedron as well as one equation for each face 
\be
\label{eqn.stheta}
\begin{aligned}
c_{12} \theta_0 - c_{02} \theta_1 + c_{01} \theta_2 & = 0  \qquad
&
c_{13} \theta_0 - c_{03} \theta_1 + c_{01} \theta_3 & = 0 \\ 
c_{23} \theta_0 - c_{03} \theta_2 + c_{02} \theta_3 & = 0  \qquad
&
c_{23} \theta_1 - c_{13} \theta_2 + c_{12} \theta_3 & = 0 
\end{aligned}
\ee
of each tetrahedron  of $\calT$. Here $c_{ij}=c(e_{ij})$ where $e_{ij}$  is the
edge $(i,j)$ and $\theta_k=\theta(f_k)$ where $f_k$ is the face opposite to the 
vertex $k$ as in Figure~\ref{fig.tetrahedron}.

Super-Ptolemy assignments lead to a fundamental correspondence 
described by a pair of bijections
\be
\label{eqn.oto}  
\xymatrix{
\left\{\txt{Generically decorated\\$(\osp_{2|1}(\BC),N)$-reps on $M$}\right\}
\ar@{<->}[r]^-{1-1} &
P_{2|1}(\calT)\ar@{<->}[r]^-{1-1} &
\left\{\txt{Natural $(\osp_{2|1}(\BC),N)$-\\cocycles on $\Tdot$}\right\}}
\ee
which are given explicitly in Figures~\ref{fig.d2p} and~\ref{fig.p2c}.

\begin{figure}[htpb!]
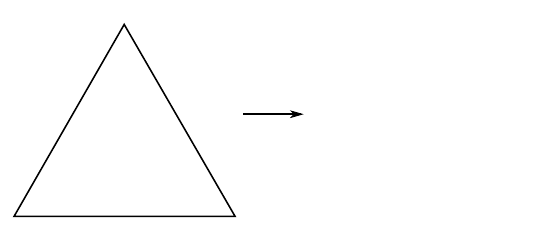
\caption{From decorated representations to Ptolemy assignments, with the bilinear
and trilinear forms as in~\eqref{bracket2} and~\eqref{bracket3}.}
\label{fig.d2p}
\end{figure}

\begin{figure}[htpb!]
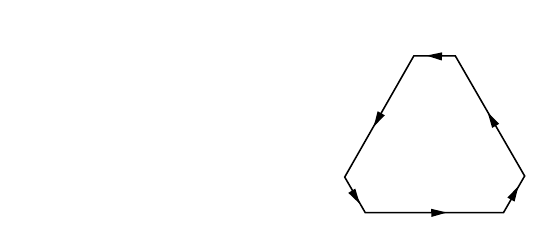
\caption{From Ptolemy assignments to natural cocycles.}
\label{fig.p2c}
\end{figure}

As written in~\eqref{eqn.stheta}, a super-Ptolemy assignment $(c,\th)$ satisfies
linear equations  in $\theta$. It turns out that these linear equations can be
written in a matrix form 
\be
F_c \, \theta =0  
\ee
where $F_c$ is a sqaure matrix whose entries are given by the Ptolemy variable $c$
with some signs. Obviously, we are interested in the case of $F_c$ being singular,
otherwise $\theta$ should be trivial. This motivates (see e.g. Section~\ref{sub.41})
the definition of a 1-loop invariant 
\be
\label{deltac2short}
\delta_{\calT,c,2} =
\frac{1}{c_1 \cdots c_N} \left(\prod_{\Delta} \frac{1}{c(e_\Delta)} \right) \det F_c
\ee
given in terms of the determinant of $F_c$. What's more, it motivates the
definition of a 1-loop polynomial
\be
\label{delta2tshort}
\delta_{\calT,c,2}(t) = \frac{1}{c_1 \cdots c_N}
\left(\prod_{\Delta} \frac{1}{c(e_\Delta)} \right) \det F_c(t)
\ee
given in terms of the determinant of a $t$-twisted version $F_c(t)$ of $F_c$
(see Section~\ref{sub.poly}).
This 1-loop polynomial $\delta_{\calT,c,2}(t)$ is unchanged under Pachner 2--3 moves
(see Theorem~\ref{thm.23move}) and its value $\delta_{\calT,c,2}(1)$ at $t=1$
determines whether the $\SL_2(\BC)$-representation $\rho$ of $\pi_1(M)$ corresponding
to the Ptolemy assignment $c$ admits an $\osp_{2|1}(\BC)$-lift or not
(see Theorem~\ref{thm.lift}). Based on the analogy with the 1-loop polynomial
$\delta_{\calT,c,3}(t)$ of~\cite{GY21}, we conjecture that the 1-loop polynomial
$\delta_{\calT,c,2}(t)$ equals to the $\BC^2$-torsion polynomial $\tau_{M,\rho,2}(t)$
(see Conjecture~\ref{conj.dt}).

%%%%%%%%%%%%%%%%%%%%%%%%%%%%%%%%%%%%%%%%%%%%%%%%%%%%%%%%%%%%%%%%%%%%%%%%%%%%
%%%%%%%%%%%%%%%%%%%%%%%%%%%%%%%%%%%%%%%%%%%%%%%%%%%%%%%%%%%%%%%%%%%%%%%%%%%%

\section{$\osp_{2|1}(\BC)$-representations of 3-manifolds}
\label{sec.super}

\subsection{The orthosymplectic group}
\label{sec.osp}

In this section we recall the definition of the orthosymplectic super-Lie group
$\osp_{2|1}(\BC)$. For a detailed description of super-manifolds and super-Lie
groups we refer the reader to~\cite{Berezin,Manin,Crane}.

Let $\G(\BC)$ be the Grassmann algebra over the complex numbers with unit $1$
generated by $\epsilon_i$ for $i \in \BN$: 
\be
\label{Gdef}
\G(\BC) = \BC\langle 1, \epsilon_1, \epsilon_2 ,\ldots \, | \, 1 \epsilon _i = \epsilon_i
= \epsilon_i 1,
\, \epsilon_i \epsilon_j = - \epsilon_j \epsilon_i
\textrm{ for all } i,j \in \BN  \rangle \, .
\ee
It is a $\BZ/2\BZ$-graded algebra with the unit having degree 0 and each $\epsilon_i$
having degree 1. We denote by $\G_0(\BC)$ and $\G_1(\BC)$ its even and odd part,
respectively, and  by $\G^\ast_0(\BC)$  the set of invertible elements in
$\G_0(\BC)$. There is an algebra epimorphism $\sharp  : \G(\BC) \rightarrow \BC$ 
sending all  $\epsilon_i$ to $0$, hence an element $e \in \G(\BC)$ is invertible if and 
only if $\sharp(e) \neq 0$. We write $\sharp(e)$ simply as $e^\sharp$ and call it the
body of $e$. 

An even $n|m\times n|m$-matrix $g$ is of the form
\be
g= \begin{pmatrix}
A & \rvline & B \\ \hline
C &   \rvline & D
\end{pmatrix}
\ee
where $A \in M_{n,n} (\G_0(\BC))$,  $B \in M_{n,m} (\G_1(\BC))$, $C \in M_{m,n}
(\G_1(\BC))$, and $D \in M_{m,m} (\G_0(\BC))$. The super-transpose of $g$ is
given by
\be
g^\textrm{st}= \begin{pmatrix}
A^t & \rvline & C^t \\ \hline
-B^t &   \rvline & D^t
\end{pmatrix}
\ee
and the Berezinian (or super-determinant) of $g$ is defined as
\be
\mathrm{Ber}(g) = \det (A-BD^{-1}C) \det(D)^{-1}
\ee
provided that $A$ and $D$ are invertible.

The orthosymplectic group $\osp_{2|1}(\BC)$ is the group of even $2|1 \times
2|1$-matrices $g$ satisfying
\be
\label{eqn.definition}
g^{st} \begin{pmatrix}
0 & 1 & \rvline & 0 \\
-1 & 0 & \rvline & 0 \\ \hline
0 & 0 &  \rvline & -1
\end{pmatrix} g = 
\begin{pmatrix}
0 & 1 & \rvline & 0 \\
-1 & 0 & \rvline & 0 \\ \hline
0 & 0 &  \rvline & -1
\end{pmatrix}
\ee
and $\mathrm{Ber}(g) = 1$. Writing an even $2|1\times2|1$-matrix explicitly as
\be
\label{g}
g = \begin{pmatrix}
a & b & \rvline & \alpha \\
c & d & \rvline & \beta \\ \hline
\gamma & \delta  &  \rvline & e
\end{pmatrix} \quad \textrm{for} \quad a, b, c, d,e \in \G_0 (\BC), \quad
\alpha,\beta,\gamma,\delta \in \G_1(\BC), 
\ee
the defining equations~\eqref{eqn.definition} of $\osp_{2|1}(\BC)$ are
\be
\label{eqn.defosp}
a d - b c - \gamma \delta = e^2 + 2 \alpha \beta =1, \qquad
a \beta - c \alpha - e \gamma\, =  b \beta - d \alpha  - e \delta = 0 
\ee
together with 
\be
\label{berg}
\mathrm{Ber}(g) = (a d -b c) (1 - 2 \alpha \beta e^{-2}) e^{-1} =1\,.
\ee
Note that these equations imply that $e^{\pm1} = 1 \mp \gamma \delta$; in particular,
$e^\sharp =1$. Note also that the inverse of $g$ as in~\eqref{g} is given by
\be
\label{eqn.inverse}
g^{-1} = \begin{pmatrix}
d & -b & \rvline & \delta \\
-c & a & \rvline & -\gamma\\ \hline
-\beta & \alpha  &  \rvline & e
\end{pmatrix} .
\ee

The special linear group $\SL_2(\BC)$ embeds in $\osp_{2|1}(\BC)$ in an obvious
way:
\be
\label{eqn.inj}
\SL_2 (\BC) \hookrightarrow \osp_{2|1}(\BC),  \quad
\begin{pmatrix}
a & b\\
c& d
\end{pmatrix} \mapsto
\begin{pmatrix}
a & b & \rvline & 0 \\
c & d & \rvline &  0 \\ \hline
0 & 0  & \rvline & 1
\end{pmatrix} .
\ee
Conversely, applying the epimorphism $\sharp$ entrywise, we obtain an epimorphism 
\be
\label{eqn.surj}
\osp_{2|1} (\BC) \twoheadrightarrow \SL_2(\BC),  \quad
\begin{pmatrix}
a & b & \rvline & \alpha \\
c & d & \rvline & \beta \\ \hline
\gamma & \delta  &  \rvline & e
\end{pmatrix}\mapsto
\begin{pmatrix}
a^\sharp & b^\sharp\\
c^\sharp& d^\sharp
\end{pmatrix}. 
\ee
Abusing notation, we also denote  by $\sharp$ the above epimorphism and refer to
$\sharp(g)=g^\sharp$ as the body of $g \in \osp_{2|1}(\BC)$. It follows
from~\eqref{eqn.inj} and~\eqref{eqn.surj} that for any group $G$ the epimorphism
$\sharp$ induces a surjective map
\be
\mathrm{Hom}(G,\osp_{2|1}(\BC))/_{\sim}
\overset{\sharp}{\twoheadrightarrow} \mathrm{Hom}(G,\SL_{2}(\BC))/_{\sim}
\ee
where the quotient $\sim$ is given by conjugation.

\begin{remark}
\label{rmk.oneG}
For full generality we use the Grassmann algebra with infinitely many generators as 
in~\eqref{Gdef}, but one may use one with finitely many generators.
In particular, if we use the Grassmann algebra with one odd generator
\be
\label{eqn.Godd}
\BC\langle 1, \epsilon  \, | \, 1 \epsilon  = \epsilon= \epsilon 1,
\, \epsilon^2  = 0  \rangle \, = \BC[\epsilon]/(\epsilon^2),
\ee
then its even and odd parts are $\BC$ and $\BC\epsilon$ respectively, and the
orthosymplectic group $\osp_{2|1}(\BC)$ reduces to the special affine
transformation group $\SL_2(\BC) \ltimes \BC^2$. Indeed,
if there is only one odd generator, then the map
\be
\osp_{2|1}(\BC) \rightarrow \SL_2(\BC) \ltimes \BC^2, \quad\quad
\begin{pmatrix}
a & b & \rvline & \alpha \epsilon\\
c & d & \rvline & \beta \epsilon \\ \hline
\gamma \epsilon & \delta \epsilon  &  \rvline & 1
\end{pmatrix} \mapsto \left( 
\begin{pmatrix}
a& b\\
c& d
\end{pmatrix},
\begin{pmatrix}
\alpha\\
\beta
\end{pmatrix}\right) \
\ee
is an isomorphism. 
%(In this case, Equations \eqref{eqn.defosp} and~\eqref{berg}
%imply that $\gamma=a\beta-c\alpha$ and $\delta=b \beta-d\alpha$.) 
This may seem to simplify things too much but, in fact, will be sufficient for our 
1-loop invariants--see Section \ref{sec.delta2} below.
\end{remark}

\subsection{The unipotent subgroup and pairings}
\label{sub.unip}

Since the body $g^\sharp$ of $g \in \osp_{2|1}(\BC)$ is in $\SL_2(\BC)$, the
natural action of  $\osp_{2|1}(\BC)$ on $\G_0(\BC)^2 \oplus \G_1(\BC)$
restricts to an action on $A^{2|1}$.  Here  $A^{2|1}$ is the pre-image of
$\BC^2 \setminus \{(0,0)^t\}$ under the map
\be
\G_0(\BC)^2 \oplus \G_1(\BC) \rightarrow \BC^2, \quad (a,b,\alpha)^t \mapsto 
(a^\sharp, b^\sharp)^t .
\ee
The induced action is transitive, and the stabilizer group at $(1,0,0)^t \in A^{2|1}$
is the unipotent subgroup
\be
\label{Pdef}
N=\left \{ \begin{pmatrix}
1 & b & \rvline & \alpha \\
0 & 1 &  \rvline & 0 \\ \hline
0 & -\alpha  &  \rvline & 1
\end{pmatrix} \, \Bigg| \,\, b \in \G_0(\BC), \, \alpha \in \G_1(\BC) \right \}
\ee
of $\osp_{2|1}(\BC)$. This induces a bijection
\be
\label{para}
\osp_{2|1}(\BC)/N \,\, \leftrightarrow \,\, A^{2|1}, \qquad
g N \,\, \leftrightarrow \,\, \text{left column of $g$} \,.
\ee
The space $A^{2|1} \simeq \osp_{2|1}(\BC)/N$ comes equipped with an even-valued
bilinear pairing
\be
\label{bracket2}
\langle \cdot, \cdot \rangle : A^{2|1} \times A^{2|1} \rightarrow \G_0(\BC), \qquad
\left\langle\begin{pmatrix}
a \\ b \\ \alpha
\end{pmatrix},
\begin{pmatrix}
c \\ d \\ \beta
\end{pmatrix}
\right\rangle := ad -bc -\alpha \beta 
\ee
and an odd-valued trilinear pairing
\be
\label{bracket3}
[ \cdot , \cdot  , \cdot ] : A^{2|1} \times
A^{2|1}  \times A^{2|1} \rightarrow \G_1(\BC), \quad 
\left[ 
\begin{pmatrix} a\\ b\\ \alpha \end{pmatrix},
\begin{pmatrix} b\\ c\\ \beta \end{pmatrix},
\begin{pmatrix} e\\ f\\ \gamma \end{pmatrix}
\right] :=
\det 
\begin{pmatrix}
a & b & e\\
b & c & f\\
\alpha & \beta & \gamma 
\end{pmatrix} - 2 \alpha \beta \gamma \, .
\ee
Both pairings are skew-symmetric and $\osp_{2|1}(\BC)$-invariant
\be
\langle v,w \rangle  
=\langle gv,gw \rangle, \qquad
[u,v,w]=[gu,gv,gw], \qquad g \in \osp_{2|1}(\BC)\,.
\ee

\subsection{Super-Ptolemy assignments} 
\label{sec.ptolemy}

Let $M$ be a compact 3-manifold with non-empty boundary and $\calT$ be an ideal
triangulation of its interior. We denote by $\calT^1$  and $\calT^2$ the oriented
edges and the unoriented faces of $\calT$, respectively. We first assume that $\calT$
is ordered, i.e., each tetrahedron has a vertex-ordering respecting the face-gluing, but
this condition can be relaxed; see Section~\ref{sub.concrete} below. 

\begin{definition}
\label{def.ctheta}  
A \emph{super-Ptolemy assignment} on $\calT$  is a pair of maps
\be
\label{ctheta}
c : \calT^1 \rightarrow \G^\ast_0(\BC), \qquad \theta : \calT^2 \rightarrow \G_1(\BC)
\ee
satisfying $c(-e)=-c(e)$ for all $e \in \calT^1$ and 
\be
\label{eqn.ptolemy}
c_{01} c_{23} - c_{02} c_{13} + c_{03} c_{12} + 
c_{01}  c_{03}c_{12}c_{13}c_{23}  \theta _0 \theta_2 = 0 
\ee
as well as  
\be
\label{eqn.odd}
\begin{aligned}
E_{\Delta,f_3} & : & c_{12} \theta_0 - c_{02} \theta_1 + c_{01} \theta_2 & = 0  \qquad
&
E_{\Delta,f_2} & : & 
c_{13} \theta_0 - c_{03} \theta_1 + c_{01} \theta_3 & = 0 \\ 
E_{\Delta,f_1} & : & c_{23} \theta_0 - c_{03} \theta_2 + c_{02} \theta_3 & = 0  \qquad
&
E_{\Delta,f_0} & : & 
c_{23} \theta_1 - c_{13} \theta_2 + c_{12} \theta_3 & = 0 
\end{aligned}
\ee
for each tetrahedron $\Delta$ of $\calT$. Here $c_{ij}=c(e_{ij})$ where $e_{ij}$ is
the oriented edge $[i,j]$ of $\Delta$ and $\theta_k=\theta(f_k)$ where $f_k$ is the
face of $\Delta$ opposite to the vertex $k$ as in Figure~\ref{fig.tetrahedron}.
\end{definition}

\begin{figure}[htpb!]
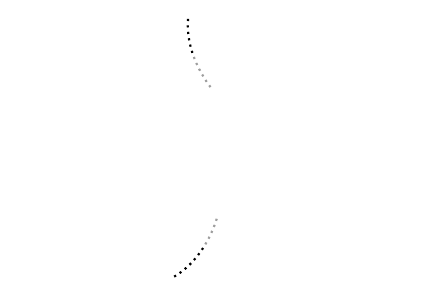
\caption{Edge and face labels for a tetrahedron.}
\label{fig.tetrahedron}
\end{figure}

\begin{lemma}
\label{lem.twofour}
If any two of \eqref{eqn.odd} together with \eqref{eqn.ptolemy} are 
satisfied, then so are the other two.
\end{lemma}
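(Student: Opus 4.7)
The plan is to first show a vanishing principle for triple products of the odd variables $\theta_i$, and then use the super-Ptolemy equation~\eqref{eqn.ptolemy} together with this principle to deduce the missing face equations. The starting observation is that any single face equation $E_{\Delta,f_k}$, written as $a\theta_i + b\theta_j + c\theta_l = 0$ with $\{i,j,l\} = \{0,1,2,3\}\setminus\{k\}$ and $a,b,c \in \G^\ast_0(\BC)$, already forces the triple product $\theta_i\theta_j\theta_l = 0$: multiplying on the right by $\theta_i\theta_j$ annihilates the first two terms via $\theta_i^2 = \theta_j^2 = 0$, leaving $c\,\theta_l\theta_i\theta_j = c\,\theta_i\theta_j\theta_l = 0$, and invertibility of $c$ gives the vanishing.

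Given two face equations $E_{\Delta,f_k}$ and $E_{\Delta,f_l}$, this observation yields two of the four triple products $\theta_a\theta_b\theta_c = 0$ directly. To obtain the remaining two, I solve one of the given equations for a single variable $\theta_s$ as a $\G_0(\BC)$-linear combination of the other two $\theta$'s in that equation, and substitute this expression into the triple product coming from the other given equation. All but one resulting term vanishes by $\theta^2 = 0$, leaving an invertible scalar times a new triple product; iterating once more, all triple products $\theta_a\theta_b\theta_c$ for distinct $a,b,c \in \{0,1,2,3\}$ are forced to vanish.

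Finally, to derive a missing face equation $E_{\Delta,f_p}$, I use the two given equations to express two of the $\theta$'s as $\G_0(\BC)$-linear combinations of the other two, and substitute into $E_{\Delta,f_p}$. Elementary algebra shows that the result reduces to a $\G_0(\BC)$-linear combination of the given face equations plus a residual term proportional to $P\theta_m$, where $P := c_{01}c_{23} - c_{02}c_{13} + c_{03}c_{12}$ and $m \in \{0,1,2,3\}$. By~\eqref{eqn.ptolemy}, $P = -c_{01}c_{03}c_{12}c_{13}c_{23}\,\theta_0\theta_2$, so the residual becomes a multiple of $\theta_m\theta_0\theta_2$, which vanishes either trivially when $m \in \{0,2\}$ (by $\theta_m^2 = 0$) or by the triple-product vanishing from the previous step when $m \in \{1,3\}$. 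The main obstacle is the bookkeeping: since~\eqref{eqn.ptolemy} is not symmetric under arbitrary vertex permutations (it distinguishes the pair $\{0,2\}$), one must verify all six pairs of given equations separately, but each case reduces to the same uniform pattern.
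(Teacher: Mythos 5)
Your proof is correct and takes essentially the same route as the paper's own: both arguments use $\theta^2=0$ to kill higher-order products of the odd variables and then exploit~\eqref{eqn.ptolemy} to make the term $(c_{01}c_{23}-c_{02}c_{13}+c_{03}c_{12})\theta_m$ vanish, which is precisely the obstruction in the exact linear-algebra identity $c_{01}E_{\Delta,f_1}-c_{02}E_{\Delta,f_2}+c_{03}E_{\Delta,f_3}=(c_{01}c_{23}-c_{02}c_{13}+c_{03}c_{12})\theta_0$ (and its permuted versions) among the face equations~\eqref{eqn.odd}. The only cosmetic difference is that the paper records the key vanishing via the fact that $c_{ij}^{-1}\theta_i\theta_j$ is independent of the pair $(i,j)$, whereas you work directly with the triple products $\theta_a\theta_b\theta_c$, which if anything is slightly cleaner since it sidesteps the case of the opposite edge $\{2,3\}$ that is not directly reachable from two given face equations.
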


\begin{proof}
Multiplying the first equation in~\eqref{eqn.odd} by $\theta_0$ implies that 
$c_{01} \theta_0 \theta_2  = c_{02} \theta_0 \theta _1$.
Similarly, we deduce that $c_{ij}^{-1} \theta_i \theta_j$ does not depend on a
choice of $i\neq j$. It follows that Equation~\eqref{eqn.ptolemy} is equivalent to
\be
c_{01} c_{23} - c_{02} c_{13} + c_{03} c_{12} + 
c_{01} c_{02}  c_{03}c_{12}c_{13}c_{23} \, c_{ij}^{-1} \theta _i \theta_j = 0 \quad 
\textrm{for } i \neq j.
\ee
This implies that 
$(c_{01} c_{23} - c_{02} c_{13} + c_{03} c_{12}) \theta_i =0$ for all $i$.
Then one easily checks that any three out of the four equations in~\eqref{eqn.odd} 
are linearly dependent. For instance,  
\be
\label{eqn.dep}
c_{01} E_{\Delta,f_1} -c_{02} E_{\Delta,f_2} + c_{03} E_{\Delta,f_3} =0 \,.
\ee
This completes the proof.
\end{proof}

\begin{remark}
Each equation in~\eqref{eqn.odd} corresponds to a face of a tetrahedron $\Delta$.
It follows that
\be
\begin{array}{rl}
  E_{\Delta,f_0} = 0 \\
  -E_{\Delta,f_1} = 0 \\
  E_{\Delta,f_2} = 0 \\
  -E_{\Delta,f_3} = 0 \\
\end{array}
\Leftrightarrow
F_{\Delta,c} \begin{pmatrix} \theta_0 \\ \theta_1 \\ \theta_2 \\ \theta_3
  \end{pmatrix} = 0
\ee
where $F_{\Delta,c}$ is a $4 \times 4$ matrix whose rows and columns are indexed by
the faces of $\Delta$, given explicitly by
\be
\label{FD}
F_{\Delta,c}=\begin{pmatrix}
  0 & c_{23} & -c_{13} & c_{12} \\
  -c_{23} & 0 & c_{03} & -c_{02} \\
  c_{13} & -c_{03} & 0 & c_{01} \\
  -c_{12} & c_{02} & -c_{01} & 0
\end{pmatrix}  \,.
\ee
Note that $F_{\Delta,c}$ is a skew-symmetric matrix whose $(i,j)$-entry for $i \neq j$
is, up to a sign, the Ptolemy variable of the edge $f_i \cap f_j$. Note also that
$F_{\Delta,c}$ has rank $3$, whereas its body $F_{\Delta,c}^\sharp$ (the matrix
obtained by applying the epimorphism $\sharp$ to all entries of $F_{\Delta,c}$)
has rank $2$.
\end{remark}

Let $P_{2|1}(\calT)$ be the set of all  super-Ptolemy assignments on $\calT$.
Composing the epimorphism $\sharp : \G(\BC) \rightarrow \BC$ with a super-Ptolemy
assignment  $(c, \theta)$, we obtain a Ptolemy assignment $c^\sharp : \calT^1
\rightarrow \BC^\ast$  (note that $\theta$ vanishes  if we apply $\sharp$), i.e.,
$c^\sharp$ satisfies 
\be
c^\sharp_{01} c^\sharp_{23} -c^\sharp_{02} c^\sharp_{13}  + c^\sharp_{03}
c^\sharp_{12} =0
\ee
for each tetrahedron of $\calT$~\cite{GTZ15}. On the other hand,  any Ptolemy
assignment on $\calT$  forms a super-Ptolemy assignment with
the trivial map $\calT^2 \rightarrow \G_1(\BC)$, assigning 0 to all faces.
Therefore, the epimorphism $\sharp$ induces a surjective map
\be
P_{2|1}(\calT) \overset{\sharp}{\twoheadrightarrow} P_2(\calT)
\ee
where $P_2(\calT)$ is the set of all Ptolemy assignments on $\calT$.

\subsection{Natural cocycles}
\label{sub.natural}

After truncating the ideal tetrahedra of $\calT$, one obtains a cell decomposition 
$\Tdot$ of $M$; see Figure~\ref{fig.truncated}.
The 1-cells $\Tdot^1$ of $\Tdot$ consist of long and short edges, and the 2-cells
$\Tdot^2$ consist of hexagons (one for each face of $\calT$) and triangles
(one for each vertex of a tetrahedron of $\calT$). Note that the triangles of $\Tdot$
give a triangulation of the boundary $\partial M$ of $M$ and that the 2-skeleton
of $\Tdot$ defines a natural groupoid associated to $\calT$, whose generators are
the short and long edges of $\Tdot$ and relations are the triangular and hexagonal 
faces of $\Tdot$.
  
\begin{figure}[htpb!]
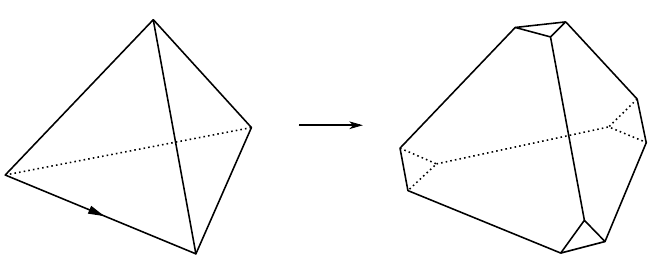
\caption{Truncating an ideal tetrahedron.}
\label{fig.truncated}
\end{figure}

\begin{definition}
\label{def.natural}
 A \emph{natural $(\osp_{2|1}(\BC),N)$-cocycle} or simply \emph{natural cocycle}
 on $\Tdot$ is a map $\varphi : \Tdot^1 \rightarrow \osp_{2|1}(\BC)$
of the form
\be
\label{eqn.natural}
\varphi(\text{short}) = 
\SM[a,\theta], \qquad
\varphi(\text{long}) =
\LM[b]
\ee
that maps the hexagons and the triangles to the identity. 
In other words, a natural cocycle is an $\osp_{2|1}(\BC)$-representation of
the groupoid of $\calT$ whose generators have the form~\eqref{eqn.natural}.
\end{definition}

Given a natural cocycle $\varphi$, let us denote
\begin{align*}
&\varphi_0(\text{short}) = \varphi(\text{short})_{1,2} \in \G_0(\BC), \qquad 
\varphi_0(\text{long}) = \varphi(\text{long})_{2,1} \in \G^\ast_0(\BC),  \\
&\varphi_1(\text{short}) = \varphi(\text{short})_{1,3} \in \G_1(\BC) \,.
\end{align*}
We now express the cocycle condition for $\varphi$ in terms of
conditions on $\varphi_0$ and $\varphi_1$.
It follows from~\eqref{eqn.inverse} that 
$\varphi(-e) = \varphi(e)^{-1}$ for $e \in \Tdot^1$  if and only if
$\varphi_i(-e) =- \varphi_i(e)$ for $i=0,1$.

\begin{lemma}
\label{lem.hexagon}
$\varphi$ satisfies the cocycle condition for a hexagon  if and only if
\be
\varphi_0 (e^k_{ji})=- \frac{\varphi_0(e_{ij})}{\varphi_0(e_{jk}) \varphi_0(e_{ki})}
\ee
for all cyclic permutations $(i,j,k)$ of $(0,1,2)$ and
\be
\label{eqn.hexagon}
\frac{\varphi_1(e^2_{10})}{\varphi_0(e_{01})} 
= \frac{\varphi_1(e^0_{21})}{\varphi_0(e_{12})} 
=\frac{\varphi_1(e^1_{02})}{\varphi_0(e_{20})}
\ee
where $e_{ij}$ and $e^k_{ij}$ denote the edges of the hexagon as in
Figure~\ref{fig.triangle}. 

\begin{figure}[htpb!]
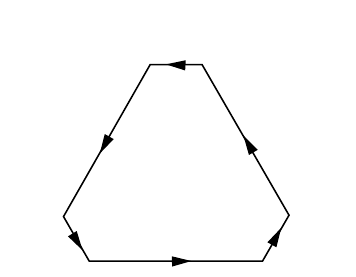
\caption{A hexagonal face.}
\label{fig.triangle}
\end{figure}
\end{lemma}

\begin{proof} 
The proof follows from a straightforward computation for the cocycle condition, i.e.,
comparing the entries of
$\varphi(e^0_{21}) \varphi(e_{01}) \varphi(e^1_{02})=
\varphi(e_{20})^{-1} \varphi(e^2_{10})^{-1} \varphi(e_{12})^{-1}$.
\end{proof}

Let $\theta \in \G_1(\BC)$ be the odd element given in Equation~\eqref{eqn.hexagon}.
It follows from Lemma~\ref{lem.hexagon} that the $\varphi_0$ and $\varphi_1$-values
on the short edges are determined by the $\varphi_0$-values on the long edges with
$\theta$. More precisely, we have
\be
\label{eqn.matrix}
\varphi(e^k_{ji}) = \SM[- \frac{\varphi_0(e_{ij})}{\varphi_0(e_{jk})
\varphi_0(e_{ki})}, \varphi_0(e_{ij}) \theta ]
\ee
for any cyclic permutations $(i,j,k)$ of $(0,1,2)$. Then identifying each long edge of $\Tdot$ with an edge of $\calT$  naturally and placing the odd element $\theta$ to the corresponding hexagonal face of $\Tdot$, or equivalently, to the
 face of $\calT$, we deduce that a natural cocycle $\varphi$
 is determined by two maps 
 \begin{equation*}
 c: \calT^1 \rightarrow \G^\ast_0(\BC), \quad 
 \theta:\calT^2 \rightarrow \G_1(\BC)\,.
 \end{equation*}
 Note that  $c$ is the restriction of $\varphi_0$ to the long edges.

\begin{lemma}
\label{lem.tetrahedron}
$\varphi$ satisfies the cocycle condition for all triangular faces of $\Tdot$ if and
only if the pair $(c,\theta)$ defined above is a super-Ptolemy assignment, i.e.,
satisfies~\eqref{eqn.ptolemy} and~\eqref{eqn.odd} for all tetrahedra of $\calT$.
\end{lemma}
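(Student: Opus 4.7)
The plan is to carry out the cocycle relation explicitly around each triangular face of $\Tdot$ using the explicit matrix formula~\eqref{eqn.matrix}, and to identify the resulting equations with~\eqref{eqn.ptolemy} and~\eqref{eqn.odd}. Each triangular face lies at a truncated vertex $k$ of a tetrahedron $\Delta$ and is bounded by three short edges, each of which belongs to one of the three hexagonal faces adjacent to vertex $k$. Using~\eqref{eqn.matrix}, each such short edge is assigned a matrix of the form $M(a,\vartheta):=\SM[a,\vartheta]$, where $a$ and $\vartheta$ are expressible via the Ptolemy data $c$ and the face-data $\theta$.

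The key computational observation is the multiplication rule
\begin{equation*}
M(a_1,\vartheta_1)\,M(a_2,\vartheta_2)\,M(a_3,\vartheta_3) \;=\; M\bigl(a_1+a_2+a_3 -\vartheta_1\vartheta_2-\vartheta_1\vartheta_3-\vartheta_2\vartheta_3,\; \vartheta_1+\vartheta_2+\vartheta_3\bigr),
\end{equation*}
which follows directly from $M(a,\vartheta)=I+X(a,\vartheta)$ where $X(a,\vartheta)$ is nilpotent with $X_1X_2=(-\vartheta_1\vartheta_2)E_{12}$. Setting this product equal to the identity therefore decouples into an odd condition $\vartheta_1+\vartheta_2+\vartheta_3=0$ and an even condition $a_1+a_2+a_3=\vartheta_1\vartheta_2+\vartheta_1\vartheta_3+\vartheta_2\vartheta_3$.

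The next step is to substitute the entries dictated by~\eqref{eqn.matrix} for the triangular face at vertex $k$ and check that the two resulting equations are exactly $E_{\Delta,f_k}$ and~\eqref{eqn.ptolemy}. For $k=3$, the three incident short edges are $e^3_{10},\,e^3_{21},\,e^3_{02}$, lying on the hexagonal faces $f_2,f_0,f_1$ respectively, and~\eqref{eqn.matrix} together with $c(-e)=-c(e)$ gives $\vartheta_i$-entries $c_{01}\theta_2,\,c_{12}\theta_0,\,-c_{02}\theta_1$, so the odd condition is precisely $E_{\Delta,f_3}$. For the even condition, one rewrites the products $\vartheta_i\vartheta_j$ using the multiplications of $E_{\Delta,f_3}$ by $\theta_0,\theta_1,\theta_2$ (as in the proof of Lemma~\ref{lem.twofour}, which establishes that $c_{ij}^{-1}\theta_i\theta_j$ is independent of the pair $(i,j)$), and a short manipulation collapses it to~\eqref{eqn.ptolemy}. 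The remaining three triangular faces (at vertices $0,1,2$) are handled by the same computation after cyclic relabeling and yield the other three equations of~\eqref{eqn.odd}, while the even condition always recovers the same Ptolemy equation~\eqref{eqn.ptolemy}.

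The converse direction is immediate from the same computation: assuming $(c,\theta)$ satisfies~\eqref{eqn.ptolemy} and~\eqref{eqn.odd}, the displayed matrix identity holds for each triangular face, giving the cocycle relation. The main obstacle is purely notational, namely keeping careful track of orientations of the short edges, the cyclic ordering around each triangular face, the antisymmetry $c(-e)=-c(e)$, and the Grassmann sign rules when multiplying out; once carried out at one vertex, the remaining three follow from the manifest $S_4$-symmetry of the tetrahedral labeling, and Lemma~\ref{lem.twofour} guarantees that the four individual face equations are mutually consistent with the single Ptolemy equation obtained repeatedly from the even conditions.
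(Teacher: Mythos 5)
Your proof is correct and takes essentially the same route as the paper's: both verify the cocycle condition at each truncated vertex by multiplying out the $N$-matrices assigned by~\eqref{eqn.matrix} to the three incident short edges and reading off the odd and even components, with the odd entries giving the four face-equations~\eqref{eqn.odd} (one per vertex, namely $E_{\Delta,f_k}$ at vertex $k$) and the even entries giving~\eqref{eqn.ptolemy} after a short manipulation using the consequences of Lemma~\ref{lem.twofour}. Your write-up is somewhat more systematically organized in two respects: you state the closed-form multiplication rule $M(a_1,\vartheta_1)M(a_2,\vartheta_2)M(a_3,\vartheta_3)=M(\sum a_i - \sum_{i<j}\vartheta_i\vartheta_j,\sum\vartheta_i)$ and then set the product equal to the identity, whereas the paper writes the equivalent two-sided identity $\varphi(e^0_{31})=\varphi(e^0_{32})\varphi(e^0_{21})$ and compares entries; and you observe explicitly that the remaining vertices follow by the manifest $S_4$-relabeling rather than redoing the computation. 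Both of these make the converse direction and the mutual consistency of the four face-equations (via Lemma~\ref{lem.twofour}) slightly more transparent, but the substance is identical to the paper's argument.
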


\begin{proof}
Labeling the vertices of a tetrahedron with $\{0,1,2,3\}$ and using the same notation
as in Lemma~\ref{lem.hexagon}, the cocycle condition for the triangular faces
has the form $ \varphi(e^i_{jl})=\varphi(e^i_{jk})\varphi(e^i_{kl})$. For instance,
the triangular face near the vertex $0$ gives
$\varphi(e^{0}_{31})=\varphi(e^{0}_{32})\varphi(e^{0}_{21})$:
\be
\SM[- \frac{c_{13}}{c_{30} c_{01}},
c_{13} \theta_2] = \SM[- \frac{c_{23}}{c_{30} c_{02}},
c_{23} \theta_1 ]
\SM[- \frac{c_{12}}{c_{20} c_{01}},
c_{12} \theta_3]\,.
\ee
Comparing the entries of the above equation, we obtain~\eqref{eqn.ptolemy}
and the first equation in~\eqref{eqn.odd}. We obtain the other three equations
of~\eqref{eqn.odd} similarly from the other triangular faces.
\end{proof}

Lemmas~\ref{lem.hexagon} and~\ref{lem.tetrahedron} show that there is a one-to-one
correspondence
\be
\label{eqn.oto1}  
\xymatrix{P_{2|1}(\calT)\ar@{<->}[r]^-{1-1}&
\left\{\txt{Natural $(\osp_{2|1}(\BC),N)$-\\cocycles on $\Tdot$}\right\}} \, .
\ee
This verifies one bijection in the fundamental correspondence~\eqref{eqn.oto} and
its explicit formula given in Figure~\ref{fig.p2c}.

\begin{remark}
Applying the epimorphism $\sharp : \osp_{2|1}(\BC) \rightarrow \SL_2(\BC)$ to both
sides of~\eqref{eqn.oto1}, the correspondence~\eqref{eqn.oto1} reduces to the
bijection between $P_2(\calT)$ and the set of natural
$(\SL_2(\BC),N_2)$-cocycles~\cite[Sec.1.2]{Garoufalidis:ptolemy}.
 Namely, there is a commutative diagram
\be
\label{eqn.lift}
\begin{tikzcd}
P_{2|1}(\calT) \arrow[d, two heads, "\sharp"] \arrow[leftrightarrow, "1-1"]{r} &
\left\{\txt{Natural $(\osp_{2|1}(\BC),N)$-\\cocycles on $\Tdot$}\right\} 
\arrow[d, two heads,"\sharp"] \\
P_2(\calT) \arrow[leftrightarrow, "1-1"]{r} & 
\left\{\txt{Natural $(\SL_{2}(\BC),N_2)$-\\cocycles on $\Tdot$}\right\}
\end{tikzcd}
\ee
\end{remark}
\subsection{Decorations}
\label{sec.deco}

Let $\widetilde{M}$ be the universal cover of $M$ and  $\widetilde{\calT}$ be the
ideal triangulation of its interior induced from $\calT$. We denote by 
$\widetilde{\calT}^0$  the set of (ideal) vertices of $\widetilde{\calT}$. We will
use similar notations for $\Tdot$.

\begin{definition} 
\label{def.deco} 
(a) An \emph{$(\osp_{2|1}(\BC),N)$-representation} is an
$\osp_{2|1}(\BC)$-representation $\rho$ of $\pi_1(M)$  such that
$\rho (\pi_1(\partial M)) \subset N$ up to conjugation.
\newline
(b) A \emph{decoration} of an $(\osp_{2|1}(\BC),N)$-representation $\rho$ is a map
$D : \widetilde{\calT}^0 \rightarrow \osp_{2|1}(\BC)/N$ such that
\be
\label{eqn.equiv}
D(\gamma   \cdot v) = \rho(\gamma) D(v) \quad \textrm{ for } \gamma \in \pi_1(M) 
\textrm{ and }  v \in \widetilde{\calT}^0  \,.
\ee
(c) A decoration $D$ is called  \emph{generic} if $\langle D(v_0), D(v_1)
 \rangle^\sharp \neq 0$ for all vertices $v_0$ and $v_1$ joined by an edge of
 $\widetilde{\calT}$. Here we use the identification~\eqref{para} and the bilinear
 pairing~\eqref{bracket2}.
\end{definition}

In what follows, by a generically decorated representation we mean an
$(\osp_{2|1}(\BC),N)$-representation with a generic decoration. For simplicity we
identify a generically decorated representation $(\rho,D)$ with $(g \rho g^{-1}, 
gD)$  for all $g \in \osp_{2|1}(\BC)$. Note that if $D$ is a (generic) decoration of
$\rho$, then $gD$ is  a (generic) decoration of $g\rho g^{-1}$.

\begin{lemma}
\label{lem.unique}
For $N$-cosets $gN$ and $hN$ with $\langle gN,hN \rangle^\sharp \neq 0$ there
is a unique pair of coset-representatives $g' \in gN$ and $h' \in hN$ such that
$(g')^{-1} h'$ is of the form
\be
\label{eqn.counter}
(g')^{-1} h' = \LM[c] \,.
\ee
Moreover, $c = \langle gN, hN \rangle$. 
\end{lemma}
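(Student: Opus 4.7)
The plan is to reduce the statement to a Bruhat-type decomposition in $\osp_{2|1}(\BC)$: set $M := g^{-1} h$ and factor $M = n_1 \LM[c] n_2^{-1}$ with $n_1, n_2 \in N$; then $g' := g n_1$ and $h' := h m^{-1}$ (with $m = n_2^{-1}$) give the desired representatives, and uniqueness of the factorization translates directly into uniqueness of the pair $(g',h')$.

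First I would identify the scalar $c$. Since the explicit form of $N$ in \eqref{Pdef} shows that right-multiplication by $N$ preserves the left column, the left columns of any $g', h'$ remain equal to those of $g, h$, which I denote $v, w$. From $h' = g' \LM[c]$ and the first column $(0,c,0)^t$ of $\LM[c]$, the left column of $h'$ equals $c$ times the middle column $g'_2$ of $g'$, i.e.\ $w = c\,g'_2$. Combining with the orthogonality relation $\langle g'_1, g'_2\rangle = 1$ (which is part of $g'^{\,st} J g' = J$, and corresponds to the first equation in \eqref{eqn.defosp}), and using that $c \in \G_0(\BC)$ is even so $c^{-1}$ pulls out of the pairing, I get $c = \langle v, w\rangle = \langle gN, hN\rangle$. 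The hypothesis $\langle gN, hN\rangle^\sharp \neq 0$ says exactly that this $c$ lies in $\G_0^\ast(\BC)$, so $c^{-1}$ and hence $\LM[c]$ make sense.

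For uniqueness I would do a direct matrix computation. Any other pair has the form $(g'n, h'm)$ for $n, m \in N$, and then $(g'n)^{-1}(h'm) = n^{-1} \LM[c] m$. Writing $n^{-1} = \SM[\tilde b,\tilde\alpha]$ and $m = \SM[b',\alpha']$ and multiplying out, the $(1,1)$ and $(3,1)$ entries come out $\tilde b c$ and $-\tilde\alpha c$, which must vanish; since $c$ is invertible, $n = I$. Then from $\LM[c] m$ the $(2,2)$ and $(2,3)$ entries are $cb'$ and $c\alpha'$, forcing $m = I$ as well.

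Existence is the step that costs genuine work. With $c := \langle gN, hN\rangle$ and $M = g^{-1}h = \begin{psmallmatrix} a & b & \alpha \\ c & d & \beta \\ \gamma & \delta & e \end{psmallmatrix}$, I read off from the requirement that $n_1^{-1} M n_2 = \LM[c]$ the unique candidates: the first column forces $\tilde\alpha_1 = c^{-1}\gamma$ and $\tilde b_1 = -c^{-1}a$, and then the $(2,3)$ entry forces $\alpha_2 = -c^{-1}\beta$, and the $(2,2)$ entry forces $b_2$. The remaining entries — $(1,3)=0$, $(3,2)=0$, $(3,3)=1$ — are not automatic from the shape but must be verified, and this is where the $\osp$ identities enter: the $(1,3)=0$ check reduces to $a\beta - c\alpha - e\gamma = 0$, the $(3,2)=0$ check reduces to $b\beta - d\alpha - e\delta = 0$, and the $(3,3)=1$ check reduces, after using $e = 1 - \gamma\delta$ (noted just after \eqref{berg}) and $\gamma^2 = \beta^2 = 0$, to the identity $\gamma(c\delta + \beta) = 0$, which follows from combining \eqref{eqn.defosp}(i) with \eqref{eqn.defosp}(iii). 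The main obstacle is precisely this bookkeeping: tracking parities carefully, repeatedly using that odd squares vanish, and recognizing the ``extra'' entries as rewrites of the defining equations of $\osp_{2|1}(\BC)$.
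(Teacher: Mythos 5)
Your approach is essentially the same as the paper's: both are direct matrix computations realizing a Gauss (Bruhat-type) decomposition of $g^{-1}h$ into $N \cdot \LM[c] \cdot N$, and your uniqueness argument by perturbing a known solution by $(n,m) \in N \times N$ is a clean restatement of the paper's implicit uniqueness. The paper streamlines by normalizing $gN = N$ at the outset and treating the coset representative $h'$ as an \emph{unknown} element of $\osp_{2|1}(\BC)$ with fixed left column, so that the defining equations of the group determine $h'$ directly rather than leaving extra entries of the product to be verified; your version fixes $M = g^{-1}h$ and must then check that the $(1,3)$, $(3,2)$, $(3,3)$ entries of $n_1^{-1}Mn_2$ come out right, which costs a little more bookkeeping. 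Your identification of $c$ via $\langle v,w\rangle = c\,\langle g'_1,g'_2\rangle = c$ is a nice touch not spelled out in the paper, though it is immediate in the paper's normalization since $\langle (1,0,0)^t,(a,c,\gamma)^t\rangle = c$.

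One inaccuracy worth flagging: the identities you cite for the verification of the $(3,2)$ and $(3,3)$ entries are not the right ones. The $(3,2)$ check reduces (after clearing denominators) to $\gamma d - c\delta - e\beta = 0$, which is not the fourth equation $b\beta - d\alpha - e\delta = 0$ of~\eqref{eqn.defosp}; rather it is the third equation applied to $M^{-1}$ (equivalently, the ``row'' version coming from $MJM^{st}=J$). Similarly, the $(3,3)$ check reduces to $\gamma(c\delta + \beta)=0$, and deriving this requires all three of (i), (iii), (iv) of~\eqref{eqn.defosp} (or, more directly, the same row relation $\gamma d - c\delta - e\beta = 0$ multiplied by $\gamma$ together with $e = 1-\gamma\delta$), not just (i) and (iii). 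These are attribution errors rather than gaps — the needed relations do hold because $M^{-1}\in\osp_{2|1}(\BC)$ — but a careful write-up should invoke the correct consequences of the group axioms.
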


\begin{proof}
We may assume that $gN=N$ and $hN$ corresponds to $(a,c,\gamma)^t \in A^{2|1}$ 
with $c^\sharp \neq 0$.  Then a straightforward computation
\be
\begin{pmatrix}
1 & f & \rvline & \epsilon \\
0 & 1 &  \rvline & 0 \\ \hline
0 & -\epsilon &  \rvline & 1
\end{pmatrix}^{-1}
\begin{pmatrix}
a & b & \rvline & \alpha \\
c & d &  \rvline & \beta \\ \hline
\gamma &  \delta &  \rvline & e
\end{pmatrix}
=
\begin{pmatrix}
a-cf-\epsilon \gamma & b-df-\epsilon \delta & \rvline & \alpha - e \epsilon \\
c & d &  \rvline & \beta \\ \hline
c\epsilon+\gamma &  d \epsilon + \delta &  \rvline & \epsilon \beta +e
\end{pmatrix}
\ee
shows that the matrix in the right-hand side is of the form~\eqref{eqn.counter} only
if $\epsilon = -\gamma/c$, $f = a/c$, and  $d=\beta=0$.  Then it follows from
the  defining equations of $\osp_{2|1}(\BC)$ that $\delta=0$, $e=1$, $b=-1/c$, and
$\alpha =- \gamma/c$. This proves that the desired pair of coset-representatives
exists uniquely.
\end{proof}

For a generically decorated representation $(\rho,D)$ Lemma~\ref{lem.unique} implies
that there exists a unique map 
\be
\psi : \mathring{\widetilde{\calT}}^0 \rightarrow \osp_{2|1}(\BC)
\ee
such that 
\begin{itemize}
\item
$\psi(v) \in D(w)$ if $v$ is in the boundary
component of $\Mdott$ corresponding to $w \in \widetilde{\calT}^0$;
\item
$\psi(v_0)^{-1} \psi(v_1)$ is a matrix of the form~\eqref{eqn.counter} if $v_0$ and
$v_1$ are joined by an long edge. 
\end{itemize}
It follows from~\eqref{eqn.equiv} that $\psi(\gamma \cdot v) =\rho(\gamma) \psi(v)$
for $\gamma \in \pi_1(M)$, hence $\psi(\gamma \cdot v_0)^{-1} \psi(\gamma \cdot v_1)
=\psi(v_0)^{-1} \psi(v_1)$ for any vertices $v_0$ and $v_1$. Therefore if we define 
\be
\varphi : \Tdot^1 \rightarrow \osp_{2|1}(\BC), \quad 
 \varphi(e) :=\psi(v_0)^{-1} \psi(v_1)
\ee
for any lift $[v_0,v_1]$ of an edge $e \in \Tdot^1$, then $\varphi$ is well-defined 
and  by definition is a natural cocycle. This construction induces a one-to-one
correspondence
\be
\label{eqn.oto2}  
\xymatrix{
\left\{\txt{Natural $(\osp_{2|1}(\BC),N)$-\\cocycles on $\Tdot$}\right\}
\ar@{<->}[r]^-{1-1} &
\left\{\txt{Generically decorated\\$(\osp_{2|1}(\BC),N)$-reps on $M$}\right\}
.}
\ee

\begin{remark}
Applying the epimoprhism $\sharp : \osp_{2|1}(\BC) \rightarrow \SL_2(\BC)$ to both
sides of~\eqref{eqn.oto2}, the correspondence~\eqref{eqn.oto2} reduces to the
 bijection between natural $(\SL_2(\BC),N_2)$-cocycles and generically decorated
$(\SL_2(\BC),N_2)$-representations; see \cite[Sec 1.2]{Garoufalidis:ptolemy}.
\end{remark}

Combining the correspondences~\eqref{eqn.oto1} and~\eqref{eqn.oto2}, we obtain
\be
\label{eqn.oto3}  
\xymatrix{
\left\{\txt{Generically decorated\\$(\osp_{2|1}(\BC),N)$-reps on $M$}\right\}
\ar@{<->}[r]^-{1-1} &
P_{2|1}(\calT)}
\ee
given explicitly on edges $e_{ij} \in \calT^1$ and faces $f_{ijk} \in \calT^2$ by
\be
\label{d2c}
c(e_{ij}) = \langle g_i N, g_j N \rangle, \qquad
\theta(f_{ijk}) = \frac{[ g_i N, g_j N, g_k N]}{\langle g_i N, g_j N \rangle
\langle g_j N, g_kN \rangle \langle g_k N, g_i N  \rangle} \,.
\ee
Here we use the identification~\eqref{para} and
the bilinear and trilinear pairings~\eqref{bracket2} and~\eqref{bracket3}.
This verifies the first bijection in the fundamental correspondence~\eqref{eqn.oto}
and the explicit formula given in Figure~\ref{fig.d2p}. 

The next theorem is a direct consequence of the fundamental correspondence~\eqref{eqn.oto}
(see also the diagram~\eqref{eqn.lift}).

\begin{theorem}
\label{thm.ptolemy}
There is a map $P_{2|1}(\calT) \rightarrow
\mathrm{Hom}(\pi_1(M),\osp_{2|1}(\BC))/_{\sim}$ which fits into
a commutative diagram
\be
\label{eqn.diagram}
\begin{tikzcd}
P_{2|1}(\calT) \arrow[d, two heads, "\sharp"] \arrow[r] &
\mathrm{Hom}(\pi_1(M), \osp_{2|1}(\BC)) 
/ _{\sim} \arrow[d, two heads,"\sharp"] \\
P_2(\calT) \arrow[r] & \mathrm{Hom}(\pi_1(M), \SL_{2}(\BC)) / _{ \sim}
\end{tikzcd}
\ee
and whose image is the set of all conjugacy
classes of $(\osp_{2|1}(\BC),N)$-representations admitting a generic decoration.
\end{theorem}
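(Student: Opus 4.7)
The plan is to construct the horizontal map by composing the already-established bijections and then forgetting the decoration, and to check commutativity and image by unwinding definitions.

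First, I would define the map $P_{2|1}(\calT) \to \mathrm{Hom}(\pi_1(M),\osp_{2|1}(\BC))/{\sim}$ as follows. Given a super-Ptolemy assignment $(c,\theta) \in P_{2|1}(\calT)$, apply the bijection~\eqref{eqn.oto1} to obtain a natural cocycle $\varphi$ on $\Tdot$, then apply the inverse of the bijection~\eqref{eqn.oto2} to obtain a generically decorated $(\osp_{2|1}(\BC),N)$-representation $(\rho,D)$ on $M$, and finally forget the decoration to obtain the conjugacy class of $\rho$. Since we identify $(\rho,D) \sim (g\rho g^{-1}, gD)$, the conjugacy class is well-defined on $P_{2|1}(\calT)$. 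The fact that the resulting map lands in conjugacy classes of $(\osp_{2|1}(\BC),N)$-representations (rather than arbitrary representations) is immediate from the definition of a decoration, since $D(\gamma\cdot v) = \rho(\gamma)D(v)$ for $v$ a cusp vertex forces $\rho(\pi_1(\partial M))$ to preserve the $N$-coset $D(v)$, hence to be conjugate into $N$.

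Next, I would verify commutativity of~\eqref{eqn.diagram}. Going down then right: applying $\sharp$ to $(c,\theta)$ gives the Ptolemy assignment $c^\sharp \in P_2(\calT)$, which under the classical bijection of~\cite{Garoufalidis:ptolemy} produces a conjugacy class of decorated $(\SL_2(\BC),N_2)$-representations. Going right then down: the construction of $(\rho,D)$ from $(c,\theta)$ is entrywise compatible with $\sharp$ because the matrix formulas~\eqref{eqn.matrix} for the short edges and the matrix $\LM[c]$ for the long edges reduce under $\sharp$ to their $\SL_2(\BC)$-analogs; applying $\sharp$ to the cocycle gives the natural $(\SL_2(\BC),N_2)$-cocycle for $c^\sharp$. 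This is already recorded in the commutative diagram~\eqref{eqn.lift}, so composing with the bijection~\eqref{eqn.oto2} and its $\SL_2(\BC)$-analog gives the required commutativity.

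For the image, I would argue both inclusions. The containment "image $\subseteq$ reps admitting generic decoration" is by construction, since every element in the image comes from a generically decorated representation by definition of~\eqref{eqn.oto2}. Conversely, given any conjugacy class $[\rho]$ of $(\osp_{2|1}(\BC),N)$-representation admitting a generic decoration $D$, the pair $(\rho,D)$ corresponds under~\eqref{eqn.oto2} to a natural cocycle, which under~\eqref{eqn.oto1} corresponds to a super-Ptolemy assignment $(c,\theta)$ with $c(e_{ij}) = \langle g_iN, g_jN\rangle$ and $\theta(f_{ijk})$ given by the formula~\eqref{d2c}; this $(c,\theta)$ maps back to $[\rho]$ by construction.

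The only subtle point, and the step I would be most careful with, is checking that the map is well-defined on $P_{2|1}(\calT)$ (i.e.\ independent of choices made in inverting~\eqref{eqn.oto2}): different decorations of the same $\rho$ correspond to different super-Ptolemy assignments, but we are going the other direction, so this is automatic. The main sanity check is that conjugating the decorated representation $(\rho,D)$ by $g \in \osp_{2|1}(\BC)$ does not change the conjugacy class $[\rho]$ we output, which is manifest. Thus no genuine obstacle arises beyond bookkeeping already packaged in Lemmas~\ref{lem.hexagon}, \ref{lem.tetrahedron}, and~\ref{lem.unique}.
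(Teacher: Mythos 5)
Your proposal is correct and follows essentially the same route as the paper, which states that the theorem is ``a direct consequence of the fundamental correspondence~\eqref{eqn.oto} (see also the diagram~\eqref{eqn.lift})'' — i.e.\ compose the two bijections, forget the decoration, and read off commutativity from~\eqref{eqn.lift} and the image from the genericity hypothesis baked into~\eqref{eqn.oto2}. You have simply unpacked that one-line justification into its constituent checks.
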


\subsection{Action on $P_{2|1}(\calT)$}
\label{sub.action}

Let $h$ be the the number of (ideal) vertices of $\calT$. There is an action of
$\G^\ast_0(\BC)^h$ on $P_{2|1}(\calT)$ :
\be
\G^\ast_0(\BC)^h \times P_{2|1}(\calT) \rightarrow P_{2|1}(\calT),
\quad (x, (c,\theta)) \mapsto x \cdot (c,\theta)=(x \cdot c, \, x \cdot \theta) 
\ee 
where $x \cdot c$ and $x \cdot \theta$ are defined as follows.
Regarding that $x =(x_1,\ldots,x_h)$ is assigned to the vertices of $\calT$, 
\be
x \cdot c : \calT^1 \rightarrow \G^\ast_0(\BC), \quad e \mapsto x_i x_j c(e)
\ee
where $x_i$ and $x_j$ are assigned to the vertices of $e$, and 
\be
x \cdot \theta : \calT^2 \rightarrow \G_1(\BC), \quad f \mapsto (x_i x_jx_k)^{-1} \theta(f)
\ee
where $x_i, x_j,$ and $x_k$ are assigned to the vertices of $f$. One easily checks that
$x \cdot (c, \theta)$  satisfies Equations~\eqref{eqn.ptolemy} and~\eqref{eqn.odd}, 
i.e., $x \cdot (c, \theta) \in  P_{2|1}(\calT)$. This action reduces to the
$(\BC^\ast)^h$-action on $P_2({\calT})$ described
in~\cite[\S4]{Garoufalidis:ptolemy} if we forget $\theta$ and restrict $x$ 
to $(\BC^\ast)^h$.
\begin{theorem}
\label{thm.same}
The super-Ptolemy assignments $(c,\theta)$ and $x \cdot (c, \theta)$ determine 
up to conjugation the same representation.
\end{theorem}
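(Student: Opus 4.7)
The strategy is to realize the $\G_0^*(\BC)^h$-action on $P_{2|1}(\calT)$ at the level of (generic) decorations via the fundamental correspondence~\eqref{eqn.oto3}, and thereby see that it leaves the underlying representation unchanged.

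Fix $x = (x_1, \ldots, x_h) \in \G_0^*(\BC)^h$ and a generically decorated $(\osp_{2|1}(\BC), N)$-representation $(\rho, D)$ of $\pi_1(M)$ corresponding to $(c, \theta) \in P_{2|1}(\calT)$. Under the identification $\osp_{2|1}(\BC)/N \simeq A^{2|1}$ of~\eqref{para}, scalar multiplication $u \mapsto x_i \cdot u$ by any $x_i \in \G_0^*(\BC)$ preserves $A^{2|1}$ (since $x_i^\sharp \neq 0$), yielding a well-defined scaling operation on $\osp_{2|1}(\BC)/N$. Define a new decoration
\be
D^x : \widetilde{\calT}^0 \to \osp_{2|1}(\BC)/N, \qquad D^x(v) := x_{[v]} \cdot D(v),
\ee
where $[v] \in \{1, \ldots, h\}$ labels the ideal vertex of $\calT$ covered by $v$.

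The key point is that $D^x$ is again a generic decoration of the \emph{same} representation $\rho$. Indeed, the action of $\rho(\gamma) \in \osp_{2|1}(\BC)$ on $A^{2|1}$ is $\G_0(\BC)$-linear and therefore commutes with multiplication by an even scalar; since $x_{[\gamma v]} = x_{[v]}$, the equivariance~\eqref{eqn.equiv} descends from $D$ to $D^x$. Genericity is preserved because $\langle x u, y v \rangle^\sharp = x^\sharp y^\sharp \langle u, v \rangle^\sharp$ with $x^\sharp, y^\sharp \neq 0$. Applying the formulas~\eqref{d2c} to $D^x$ and using the scaling identities
\be
\langle x_i u, x_j v \rangle = x_i x_j \langle u, v \rangle, \qquad [x_i u, x_j v, x_k w] = x_i x_j x_k [u, v, w]
\ee
for the pairings~\eqref{bracket2} and~\eqref{bracket3}, one finds that the super-Ptolemy assignment associated with $(\rho, D^x)$ is precisely $x \cdot (c, \theta)$.

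Since $(\rho, D)$ and $(\rho, D^x)$ share the same $\rho$, the two assignments $(c, \theta)$ and $x \cdot (c, \theta)$ determine the same conjugacy class under~\eqref{eqn.oto3}, which is the desired conclusion. The only step requiring real care is verifying the trilinear scaling identity above: one must commute even scalars past odd entries inside the Grassmann-valued determinant defining~\eqref{bracket3} while keeping track of signs. Once this is checked, together with the obvious bilinear identity, the rest of the argument is formal.
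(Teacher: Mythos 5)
Your proof is correct and follows essentially the same approach as the paper: define the rescaled decoration $x\cdot D$ via the identification $\osp_{2|1}(\BC)/N\simeq A^{2|1}$, observe that it is a generic decoration of the same representation $\rho$, and apply the formula~\eqref{d2c} together with the scaling identities for the bilinear and trilinear pairings to identify its super-Ptolemy assignment with $x\cdot(c,\theta)$. The paper states this more tersely, but the structure of the argument is identical.
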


\begin{proof} 
Let $(\rho,D)$ be a generically decorated representation 
corresponding to $(c,\theta) \in P_{2|1}(\calT)$.
Regarding $x =(x_1,\ldots,x_h)$ is assigned to the vertices of $\calT$, we define
\be
x \cdot D :  \widetilde{\calT}^0 \rightarrow \osp_{2|1}(\BC)/N, \quad v \mapsto x_i D(v)
\ee
if $v$ is a lift of the $i$-th vertex of $\calT$. Here we use the identification
$\osp_{2|1}(\BC)/N \simeq A^{2|1}$, hence the scalar multiplication
is well-defined. Then $x \cdot D$ is also a generic decoration of $\rho$, and
 Equation~\eqref{d2c} implies that   $(\rho, x \cdot D)$ corresponds to 
 $(x \cdot c, x \cdot \theta)$. 
\end{proof}

\begin{remark}
\label{rmk.reflect}
For $k \in \G^\ast_0(\BC)$  we have $(k,\ldots,k) \cdot (c, \theta) 
= (k^2c, k^{-3}\theta)$.  In particular, $(c,\theta)$ and $(c,-\theta)$ determine the 
same representation up to conjugation.
\end{remark}

\subsection{$(m,l)$-deformation}
\label{sub.deform}

One can generalize super-Ptolemy assignments and all arguments that we used in
previous sections to  $\osp_{2|1}(\BC)$-representations that may not
$(\osp_{2|1}(\BC),N)$. This can be done by considering the natural action of
$\osp_{2|1}(\BC)$  on  $A^{2|1} / \G^\ast_0(\BC)$, instead of $A^{2|1}$, where the
quotient is given by identifying $(a,b,\alpha)^t$ and $c (a,b,\alpha)^t$ for all $c \in
\G^\ast_0(\BC)$.  The stabilizer group of this action at $[(1,0,0)^t]$ is
\be
B= \left \{ \begin{pmatrix}
a & b & \rvline & \alpha \\
0 & a^{-1} &  \rvline & 0 \\ \hline
0 & -a^{-1}\alpha  &  \rvline & 1
\end{pmatrix} \, \Bigg| \,\,
a \in \G^\ast_0(\BC), \, b \in \G_0(\BC), \, \alpha \in \G_1(\BC)
\right \}.
\ee

We fix a cocycle $\sigma$ that assigns an element of $\G^\ast_0(\BC)$ to each
short edge of $\Tdot$, and consider representations
$\rho :\pi_1(M)\rightarrow \osp_{2|1}(\BC)$ satisfying up to conjugation
\be
\label{eqn.B}
\rho(\gamma) = 
\begin{pmatrix}
\sigma(\gamma) & \ast & \rvline & \ast \\
0 & \sigma(\gamma)^{-1} &  \rvline & 0 \\ \hline
0 & \ast  &  \rvline & 1
\end{pmatrix} \quad \textrm{ for all } \gamma \in \pi_1(\partial M)\, .
\ee
Here we use the same notation $\sigma$ for the cocycle and for the morphism
$\pi_1(\partial M) \rightarrow \G^\ast_0(\BC)$ induced from it; hopefully this
will cause no confusion.
As a generalization of Definitions~\ref{def.ctheta}, \ref{def.natural}, and
\ref{def.deco}, we define:

\begin{definition} 
A \emph{$\sigma$-deformed super-Ptolemy assignment} on $\calT$  is  a pair of maps
\be
c : \calT^1 \rightarrow \G^\ast_0(\BC), \qquad \theta : \calT^2 \rightarrow \G_1(\BC)
\ee
satisfying $c(-e)=-c(e)$ for all $e \in \calT^1$ and 
\be
\label{sptolemy}
c_{01} c_{23} - \frac{\sigma^2_{03}\sigma^3_{12}}{\sigma^1_{03} \sigma^0_{12}}
c_{02} c_{13} + \frac{\sigma^3_{02}\sigma^2_{13}}  { \sigma^1_{02} \sigma^0_{13}}
c_{03} c_{12} +  \frac{ \sigma^3_{02}}  { \sigma^1_{02} } 
c_{01} c_{03}c_{12}c_{13}c_{23}  \theta _0 \theta_2 = 0
\ee
as well as 
\be
\label{eqn.sood}
\begin{aligned}
E_{\Delta,f_3} \, :\,
\frac{\sigma^1_{23}}{\sigma^0_{23}} c_{12} \theta_0 - c_{02} \theta_1 +
\frac{\sigma^0_{12}}{\sigma^3_{12}} c_{01} \theta_2 & = 0 & \quad
E_{\Delta,f_2} \, :\,
c_{13} \theta_0 - \frac{\sigma^3_{01}}{\sigma^2_{01}}  c_{03} \theta_1 +
\frac{\sigma^0_{12}}{\sigma^3_{12}} c_{01} \theta_3  & = 0
\\
E_{\Delta,f_1} \, :\,
\frac{\sigma^1_{03}}{\sigma^2_{03}} c_{23} \theta_0 -
\frac{\sigma^3_{01}}{\sigma^2_{01}}  c_{03} \theta_2 + c_{02} \theta_3 & = 0
& \quad 
E_{\Delta,f_0} \, :\,
\frac{\sigma^1_{03}}{\sigma^2_{03}} c_{23} \theta_1 - c_{13} \theta_2 +
\frac{\sigma^1_{23}}{\sigma^0_{23}} c_{12} \theta_3 & = 0
\end{aligned}
\ee
for each tetrahedron $\Delta$ of $\calT$. Here $\sigma^i_{jk} \in \G^\ast_0(\BC)$ is
the element assigned by $\sigma$ at the short edge that is near to the vertex $i$
and parallel to the edge $[j,k]$; see Figure~\ref{fig.triangle}.
\end{definition}

\begin{definition}
A \emph{$\sigma$-deformed natural cocycle} on $\Tdot$ is map $\varphi : \Tdot^1
\rightarrow \osp_{2|1}(\BC)$ of the form
\be
\label{eqn.natural2}
\varphi(\text{short}) = 
\begin{pmatrix}
\sigma(\text{short}) & a& \rvline & \theta \\
0 & \sigma(\text{short})^{-1} & \rvline &  0 \\ \hline
0& -\sigma(\text{short})^{-1} \theta  & \rvline & 1
\end{pmatrix} , \qquad
\varphi(\text{long}) =
\LM[b]
\ee
that maps the hexagons and the triangles to the identity. 
In other words, a natural cocycle is an $\osp_{2|1}(\BC)$-representation of
the groupoid of $\calT$ whose generators have the form~\eqref{eqn.natural2}.
\end{definition}

\begin{definition} 
\label{def.deco2} 
(a) An \emph{$(\osp_{2|1}(\BC),B)$-representation} is an
$\osp_{2|1}(\BC)$-representation $\rho$ of $\pi_1(M)$  such that
$\rho (\pi_1(\partial M)) \subset B$ up to conjugation.
\newline
(b) A \emph{decoration} of an $(\osp_{2|1}(\BC),B)$-representation $\rho$ is a map
$D : \widetilde{\calT}^0 \rightarrow \osp_{2|1}(\BC)/B$ satisfying~\eqref{eqn.equiv}. 
\newline
(c) A decoration $D$ is called  \emph{generic} if $\langle D(v_0), D(v_1)
\rangle^\sharp \neq 0$ for all vertices $v_0$ and $v_1$ joined by an edge of
$\widetilde{\calT}$. Note that the condition makes sense, even though $\langle
 D(v_0), D(v_1) \rangle$ can be only defined up to $\G_0^\ast(\BC)$.
\end{definition}

Repeating the same arguments in Sections~\ref{sec.ptolemy}--\ref{sec.deco}
(see also \cite[\S2]{Yoon:volume}), we obtain 
\be
\label{eqn.otod}
\xymatrix{
  \left\{\txt{Generically decorated\\$(\osp_{2|1}(\BC),B)$-reps on $M$\\
      satisfying~\eqref{eqn.B}}\right\}
\ar@{<->}[r]^-{1-1} &
P^\sigma_{2|1}(\calT)\ar@{<->}[r]^-{1-1} &
\left\{\txt{$\sigma$-deformed natural \\cocycles on $\Tdot$}\right\}} 
\ee
where $P^\sigma_{2|1}(\calT)$ is the set of all $\sigma$-deformed super-Ptolemy
assignments on $\calT$.  We note that:

\noindent
1. The same argument used in Lemma~\ref{lem.twofour} shows that any three
of~\eqref{eqn.sood} are linearly dependent. 
\smallskip

\noindent
2. Composing the epimorphism $\sharp : \G(\BC) \rightarrow \BC$ with
$(c, \theta)\in P^\sigma_{2|1}(\calT)$, we obtain a $\sigma^\sharp$-deformed
Ptolemy assignment $c^\sharp : \calT^1 \rightarrow \BC^\ast$, i.e., $c^\sharp$
satisfies 
\be
c_{01}^\sharp c_{23}^\sharp - \frac{(\sigma^2_{03})^\sharp(\sigma^3_{12})^\sharp}
{(\sigma^1_{03})^\sharp(\sigma^0_{12})^\sharp}c_{02}^\sharp c_{13}^\sharp +
\frac{(\sigma^3_{02})^\sharp(\sigma^2_{13})^\sharp}
{(\sigma^1_{02})^\sharp(\sigma^0_{13})^\sharp} c_{03}^\sharp c_{12}c_{01}^\sharp=0
\ee
for each tetrahedron of $\calT$~\cite{Yoon:volume}. This defines a  
map
\be
P^\sigma_{2|1}(\calT) \overset{\sharp}{\rightarrow} P^{\sigma^\sharp}_2(\calT)
\ee
where $P_2^{\sigma^\sharp}(\calT)$ is the set of all
$\sigma^\sharp$-deformed Ptolemy assignments on $\calT$. This map is surjective
if $\sigma^\sharp = \sigma$, i.e. $\sigma$ takes values in $\BC^\ast$.
\smallskip

\noindent
3. The $\sigma$-deformed natural cocycle $\varphi$ corresponding to
$(c,\theta) \in P^\sigma_{2|1}(\calT)$ is explicitly given by
\be
\label{eqn.matrix2}
\varphi(e^k_{ji}) = 
\begin{pmatrix}
\sigma^k_{ji} & -\frac{\sigma^{j}_{ik}}{\sigma^{i}_{kj}}
\frac{c_{ij}}{c_{jk} c_{ki}} & \rvline &
c_{ij}\theta/\sigma^i_{kj} \\
0 & 1/\sigma^k_{ji} &  \rvline & 0 \\ \hline
0 & -  c_{ij}\theta / (\sigma^i_{kj}\sigma^k_{ji})  &  \rvline & 1
\end{pmatrix}
,\quad
\varphi(e_{ij})= \LM[c_{ij}]
\ee
for  Figure~\ref{fig.triangle} where $(i,j,k)$ is  a cyclic permutation of $(0,1,2)$.
\smallskip

\noindent
4. There is a $\G^\ast_0(\BC)^h$-action on $P^\sigma_{2|1}(\calT)$ defined in the
same way as that on $P_{2|1}(\calT)$; see Section~\ref{sub.action}. Also,
Theorem~\ref{thm.same} holds for  $(c,\theta) \in P^\sigma_{2|1}(\calT)$:
the $\sigma$-deformed super-Ptolemy assignments $(c,\theta)$ and
$x \cdot (c,\theta)$ determine the same representation up to conjugation.

The next theorem is a direct consequence of the correspondence~\eqref{eqn.otod}.

\begin{theorem}
There is a map $P^\sigma_{2|1}(\calT) \rightarrow
\mathrm{Hom}(\pi_1(M),\osp_{2|1}(\BC))/_{\sim}$ which fits into
\be
\label{eqn.diagram2}
\begin{tikzcd}
P^\sigma_{2|1}(\calT) \arrow[d, "\sharp"] \arrow[r] &
\mathrm{Hom}(\pi_1(M), \osp_{2|1}(\BC)) / _{\sim}
\arrow[d, two heads,"\sharp"] \\
P^{\sigma^\sharp}_{2}(\calT) \arrow[r] & \mathrm{Hom}(\pi_1(M), \SL_{2}(\BC))
/ _{ \sim}
\end{tikzcd}
\ee
and whose image is the set of all conjugacy classes of
$(\osp_{2|1}(\BC),B)$-representations admitting a generic decoration
and satisfying~\eqref{eqn.B}. 
\end{theorem}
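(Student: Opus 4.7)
The plan is to mimic the proof of Theorem~\ref{thm.ptolemy} but in the $\sigma$-deformed setting, using the one-to-one correspondences in~\eqref{eqn.otod} as the main input. First, I would construct the map $P^\sigma_{2|1}(\calT) \to \mathrm{Hom}(\pi_1(M),\osp_{2|1}(\BC))/_\sim$ as the composition
\begin{equation*}
P^\sigma_{2|1}(\calT) \;\xrightarrow{\;\text{\eqref{eqn.otod}}\;}\;
\left\{\text{gen.\ decorated $(\osp_{2|1}(\BC),B)$-reps satisfying~\eqref{eqn.B}}\right\}
\;\longrightarrow\;
\mathrm{Hom}(\pi_1(M),\osp_{2|1}(\BC))/_\sim,
\end{equation*}
where the second arrow forgets the generic decoration and passes to conjugacy classes. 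Since a conjugating element $g \in \osp_{2|1}(\BC)$ sends a decoration $D$ to a decoration $gD$ of $g\rho g^{-1}$, this second arrow is well-defined.

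Next, I would verify the commutativity of~\eqref{eqn.diagram2}. Composing the epimorphism $\sharp:\osp_{2|1}(\BC) \twoheadrightarrow \SL_2(\BC)$ entrywise sends a $\sigma$-deformed super-Ptolemy assignment $(c,\theta)$ to the $\sigma^\sharp$-deformed Ptolemy assignment $c^\sharp$, as already observed in item 2 following~\eqref{eqn.otod}. On the representation side, applying $\sharp$ to an $(\osp_{2|1}(\BC),B)$-representation yields an $\SL_2(\BC)$-representation whose restriction to $\pi_1(\partial M)$ is upper triangular with diagonal $\sigma^\sharp$. Under the bijection~\eqref{eqn.otod} and its classical counterpart from~\cite{Yoon:volume}, both routes around the square produce the same $\SL_2(\BC)$-conjugacy class, so the diagram commutes.

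Finally, for the image characterization, the inclusion ``$\subseteq$'' is immediate: every element of $P^\sigma_{2|1}(\calT)$ corresponds via~\eqref{eqn.otod} to a generically decorated $(\osp_{2|1}(\BC),B)$-representation satisfying~\eqref{eqn.B}, so its conjugacy class lies in the claimed set. For the reverse inclusion ``$\supseteq$'', given any conjugacy class of an $(\osp_{2|1}(\BC),B)$-representation $\rho$ satisfying~\eqref{eqn.B} and admitting a generic decoration $D$, the pair $(\rho,D)$ corresponds, under the right-hand bijection in~\eqref{eqn.otod}, to some $(c,\theta) \in P^\sigma_{2|1}(\calT)$ whose image under our map is exactly $[\rho]$. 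This is where one must check that the generic decoration hypothesis allows the Lemma~\ref{lem.unique}-type normalization to go through in the $B$-deformed setting; but by item 3 following~\eqref{eqn.otod} the explicit formulas~\eqref{eqn.matrix2} provide this normalization verbatim.

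The main obstacle, if any, is bookkeeping: ensuring that the short-edge cocycle $\sigma$ is consistently used on both sides of~\eqref{eqn.otod} and that the quotient by $\G_0^\ast(\BC)$ (entering via $\osp_{2|1}(\BC)/B$ in Definition~\ref{def.deco2}) does not spoil well-definedness. Once the bijections in~\eqref{eqn.otod} are granted, as they are by the arguments already assembled in Sections~\ref{sec.ptolemy}--\ref{sec.deco} and item 3 following~\eqref{eqn.otod}, the theorem is a direct corollary and no further calculation is required.
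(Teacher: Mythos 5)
Your proposal matches the paper's approach exactly: the paper states the theorem is ``a direct consequence of the correspondence~\eqref{eqn.otod},'' mirroring Theorem~\ref{thm.ptolemy} in the undeformed case, and you spell out precisely the same composition (bijection, then forget the decoration and pass to conjugacy classes), the same commutativity check via the entrywise $\sharp$, and the same image characterization. No gap; you have simply made explicit what the paper leaves implicit.
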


\subsection{Concrete triangulations}
\label{sub.concrete}

In the previous Section~\ref{sub.deform} we defined the super-Ptolemy assignements
for ordered triangulations. In this section we discuss how to define these
assignments for concrete triangulations, that is, for triangulations where each
tetrahedron comes with a bijection of its vertices with those of the standard
3-simplex. Concrete triangulations were used in~\cite[Sec.2]{Garoufalidis:gluing}
to define the gluing equations of $\mathrm{PGL}_n(\BC)$ representations of $\pi_1(M)$, 
as well as in~\cite[Sec.2.1]{Garoufalidis:ptolemy} to define the Ptolemy variety,
with or without an obstruction class. Note that all the triangulations
in \texttt{SnapPy} and \texttt{Regina} are concrete~\cite{snappy, regina}. 

Recall that the Ptolemy variables are assigned to oriented edges and satisfy the
relation that reversing the orientation of an edge reverses the value of the
Ptolemy variable. In a concrete triangulation, several edges of tetrahedra before
face-pairings are identified with each other, and this identification introduces
signs when the edges are identified in an orientation-reversed way. For a concrete
example, see Examples 3.1.1 and 3.1.2 of~\cite{Garoufalidis:ptolemy}, for the
case of the trivial obstruction class, and Example 3.2.1
of~\cite{Garoufalidis:ptolemy} for a non-trivial obstruction class. 

We now discuss the presence of these signs in a systematic way. A face-pairing of
an ordered triangulation glues two faces preserving  vertex-orderings, hence it
preserves the orientations of the faces induced from  vertex-orderings.  Therefore,
it is sufficient for ordered triangulations to consider faces with vertex-order in
counterclockwise as in Figure~\ref{fig.triangle}. However, as  concrete
triangulations do not have such a property, both faces with vertex-order in
clockwise and ones with counterclockwise appear. Therefore, we consider both
sides of a face with each side having one odd element, i.e., we assign one odd
element to each oriented face. This seems to double the number of odd elements,
but in fact,  two odd elements $\theta$ and $\theta'$ assigned to the front and
back sides of a face (as in Figure~\ref{fig.revtri}) determine each other.

\begin{figure}[htpb!]
	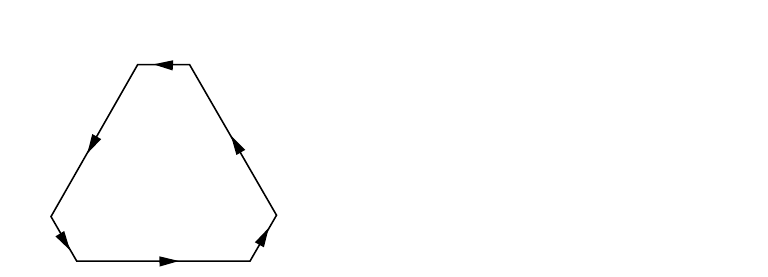
	\caption{Front/back sides of a face}
	\label{fig.revtri}
\end{figure}

Recall \eqref{eqn.matrix2} that for any cyclic permutation $(i,j,k)$ of $(0,1,2)$ 
we have
\be
\varphi(e^k_{ji}) = 
\begin{pmatrix}
	\sigma^k_{ji} & -\frac{\sigma^{j}_{ik}}{\sigma^{i}_{kj}}
	\frac{c_{ij}}{c_{jk} c_{ki}} & \rvline &
	c_{ij}\theta/\sigma^i_{kj} \\
	0 & 1/\sigma^k_{ji} &  \rvline & 0 \\ \hline
	0 & -  c_{ij}\theta / (\sigma^i_{kj}\sigma^k_{ji})  &  \rvline & 1
\end{pmatrix} \,.
\ee
Applying the same formula to the back side of the face, we obtain
\be
\varphi(e^k_{ij}) = 
\begin{pmatrix}
	\sigma^k_{ij} & -\frac{\sigma^{i}_{jk}}{\sigma^{j}_{ki}}
	\frac{c_{ji}}{c_{ik} c_{kj}} & \rvline &
	c_{ji} \theta'/\sigma^j_{ki} \\
	0 & 1/\sigma^k_{ij} &  \rvline & 0 \\ \hline
	0 & -  c_{ji} \theta' / (\sigma^j_{ki}\sigma^k_{ij})  &  \rvline & 1
\end{pmatrix} \,.
\ee
Note that $c_{ij} = - c_{ji}$ and $\sigma^{i}_{jk} = 1/ \sigma^{i}_{kj}$.
Then a straightforward computation shows that $\varphi(e^k_{ji}) \varphi(e^k_{ij}) = I$
if and only if
\be
\theta' = \sigma^{k}_{ij} \sigma^{i}_{jk} \sigma^{j}_{ki} \theta\, .
\ee
This shows that super-Ptolemy assignments on a concrete triangulation are described by the same equations~\eqref{sptolemy} and~\eqref{eqn.sood} but some of $\theta_i$ may
be replaced by $\theta'_i$. Note that $\theta_0,\ldots, \theta_3$ in~\eqref{eqn.sood} are
assigned to the sides of $f_0,\ldots,f_3$ that face front. Note also that if a face-pairing
preserves the orientation of the faces induced from the vertex-orderings, then only one
of $\theta_i$ or $\theta'_i$ appears in the face equations, and otherwise, both $\theta_i$
and $\theta'_i$ appear.

\subsection{Example: the $4_1$ knot}
\label{sub.41}

Let $\calT$ be the standard ideal triangulation of the knot complement of $4_1$
obtained by the face-pairings of two ordered ideal tetrahedra $\Delta_1$ and
$\Delta_2$ with edges $e_1$ and $e_2$ and with faces $f_1,\ldots,f_4$. See 
Figure~\ref{fig.example}. We choose a cocycle $\sigma$ on the short edges
for $m, \ell \in \G^\ast_0(\BC)$ as follows (see \cite[Ex.2.8]{Yoon:volume}):
\begin{align}
& \sigma(s_2) = \sigma(s_5) = \sigma(s_8)= \sigma(s_{11})=m, \quad
\sigma(s_6) = \sigma(s_9) = \sigma(s_{12})= m^{-1}\\
& \sigma(s_4) = \sigma(s_7) = \sigma(s_{10}) = 1, \quad
\sigma(s_1) = \ell^{-1} m^{-2}, \quad  \sigma(s_3) = \ell m
\end{align}
where $s_1,\ldots,s_{12}$ are the short edges of $\Tdot$ as in
Figure~\ref{fig.example}. Note that the morphism  induced by $\sigma$ sends 
the meridian and canonical longitude of the knot  to $m$ and $\ell$, respectively.
\begin{figure}[htpb!]
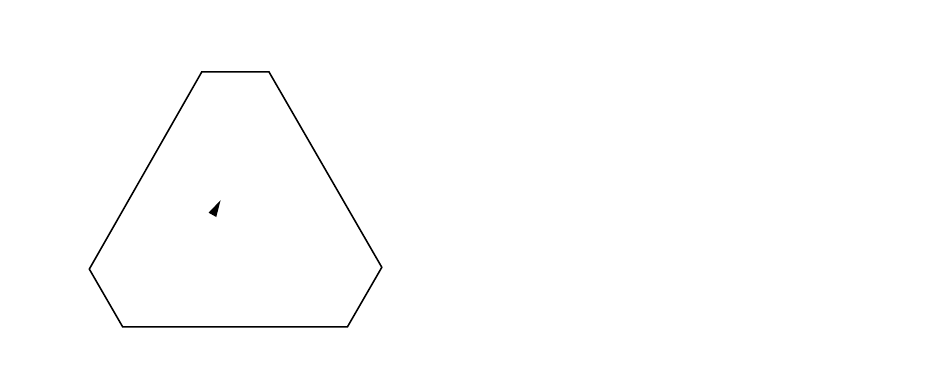
\caption{The knot complement of $4_1$.}
\label{fig.example}
\end{figure}

A $\sigma$-deformed super-Ptolemy assignments is a pair of maps
$c : \{e_1,e_2\} \rightarrow \G^\ast_0(\BC)$ and
$\theta : \{f_1,\ldots,f_4\} \rightarrow \G_1(\BC)$ satisfying
\be
\label{eqn.ex1}
\begin{aligned}
c_2^2 - \ell m^4 c_1^2 + lm^2 c_1 c_2+ m^2 c_1^3 c_2^2 \theta_2 \theta_3 &= 0\\
c_1^2 - \ell^{-1} c_2^2 +\ell^{-1} c_1 c_2 +\ell^{-1}m^{-1} c_1^3 c_2^2
\theta_3 \theta_2 &= 0
\end{aligned}
\ee
and
\be
\label{eqn.ex2}
\begin{aligned}
E_{\Delta_1,f_4}&:&
\ell^{-1}m^{-2} c_2 \theta_2 - m^{-1} c_2 \theta_3 + c_1 \theta_1 = 0  \\
E_ {\Delta_1,f_3}&: &c_1 \theta_2 -m^{-1} c_2  \theta_4  +m^{-2} c_2  \theta_1 = 0  \\ 
E_{\Delta_2,f_4}&:& c_2 \theta_3 - c_2 \theta_1 +c_1 \theta_2 = 0  \\
E_{\Delta_2,f_3}&:& \ell c_1 \theta_1 - c_2 \theta_2 + c_2 \theta_4 = 0 
\end{aligned}
\ee
where $c_i : = c(e_i)$ and $\theta_i := \theta(f_i)$. 
Writing the equations in~\eqref{eqn.ex2} in a matrix form, we have
\be
\label{eqn.exF}
\begin{pmatrix}
m^{-2} c_2 & c_1 & 0 & -m^{-1} c_2\\
c_1 & \ell^{-1} m^{-2} c_2 & -m^{-1} c_2 & 0 \\
-c_2 & c_1 & c_2 & 0 \\
\ell c_1 & -c_2 & 0 & c_2 
\end{pmatrix}
\begin{pmatrix}
\theta_1\\
\theta_2\\
\theta_3\\
\theta_4
\end{pmatrix} = \begin{pmatrix}
0\\0\\0\\0
\end{pmatrix} \,.
\ee
We are interested in the case of the above $4 \times 4$-matrix $F$ being singular,
as all $\theta_i$ should be zero, otherwise. 
One computes that if $\det F=0$, then the kernel $F$ is a free $\G_1(\BC)$-module
of rank 1:
\begin{equation*}
(\theta_1,\theta_2,\theta_3,\theta_4) =
\eta \left(\frac{c_1 + \frac{1}{\ell m} c_2}{m c_1 - c_2}, \, 1 , \,
-\frac{m c_1^2+\frac{1}{\ell m} c_2^2 }{c_2 (m c_1 -c_2) },  \,
\frac{\ell c_1^2  + (m+\frac{1}{m}) c_1 c_2 - c_2^2 }{ c_2(m c_1 -c_2 )} \right),
\quad \eta \in \G_1(\BC).
\end{equation*}
It follows that  either $\det F=0$ or not, we have $\theta_i \theta_j=0$ for any $i,j$
and thus Equation~\eqref{eqn.ex1} is simplified to 
\be
\begin{aligned}
c_2^2 - \ell m^4 c_1^2 + lm^2 c_1 c_2 &= 0\\
c_1^2 - \ell^{-1} c_2^2 +\ell^{-1} c_1 c_2 & = 0 
\end{aligned}
\ee
with
\be
\label{eqn.detF}
\det F=2 c_1^1 c_2 ^3 \, m^{-2} (m +m^{-1}-1) \,.
\ee
This shows that a $\sigma$-deformed Ptolemy assignment $(c,\theta)$ with $\theta
 \neq 0$ exists if and only if $m+m^{-1}-1=0$. For instance, we restrict $m$ and
$\ell$ to complex numbers, then
\be
m=\frac{1 \pm \sqrt{-3}}{2}, \quad \ell =-1, \quad (c_1,c_2)
= k \left( \frac{1 \mp \sqrt{-3}}{2},1\right) \quad
\textrm{for } k \in \G^\ast_0(\BC) \, .
\ee
Equation~\eqref{eqn.detF} agrees with the fact that the $\BC^2$-torsion of the
knot $4_1$ is 2$(m+m^{-1}-1)$, as shown by Kitano~\cite{Kitano}.

%%%%%%%%%%%%%%%%%%%%%%%%%%%%%%%%%%%%%%%%%%%%%%%%%%%%%%%%%%%%%%%%%%%%%%%%%%%%
%%%%%%%%%%%%%%%%%%%%%%%%%%%%%%%%%%%%%%%%%%%%%%%%%%%%%%%%%%%%%%%%%%%%%%%%%%%

\section{1-loop and $\BC^2$-torsion  polynomials}
\label{sec.delta2}

\subsection{The face-matrix of an ideal triangulation}
\label{sub.face}

In this section we define the 1-loop invariant, the 1-loop polynomial, and their
$(m,l)$-deformed version from an ideal triangulation, and conjecture that it agrees
with the corresponding version of the torsion polynomial. We will give the
definition of the 1-loop invariant in its three flavors in separate sections below.

As mentioned in Remark~\ref{rmk.oneG}, in this section, we use the Grassmann 
algebra with one odd generator; its even and odd part are isomorphic to $\BC$, 
and the  product of any two odd elements is zero. Then  Equation~\eqref{eqn.ptolemy}
reduces to the ordinary Ptolemy equation
\be
\label{eqn.pt}
c_{01} c_{23} - c_{02} c_{13} + c_{03} c_{12} = 0\,.
\ee
Therefore, a super-Ptolemy assignment  $(c,\theta)$ on an ideal triangulation $\calT$ 
is given by a pair of a Ptolemy assignment $c : \calT^1\rightarrow \BC^\ast$ with a 
map $\theta : \calT^2 \rightarrow \BC$ satisfying Equation~\eqref{eqn.odd} for each
tetrahedron of $\calT$. If $\calT$ has $N$ tetrahedra, then it has $N$ edges and 
$2N$ faces. Hence a super-Ptolemy assignment on $\calT$ is represented by a tuple
$(c_1,\ldots,c_N)$ of non-zero complex numbers satisfying the Ptolemy
equation~\eqref{eqn.pt} for each tetrahedron and  a tuple
$(\theta_1,\ldots,\theta_{2N})$  of complex numbers satisfying four linear
equations~\eqref{eqn.odd} for each tetrahedron. We call these linear equations
face-equations and write them in matrix form as 
\be
\label{F0123}
\begin{pmatrix}
F_c^0 \\ F_c^1 \\ F_c^2 \\ F_c^3 
\end{pmatrix}
\theta = 0, \qquad \theta=(\theta_1,\ldots,\theta_{2N})^t
\ee
where $F_c^k$ for $k=0,1,2,3$ are $N \times 2N$ matrices whose rows and columns 
are indexed by the tetrahedra and the faces of $\calT$, respectively. 
However, it was shown in Lemma~\ref{lem.twofour} that at each tetrahedron any
 three of the  linear equations~\eqref{eqn.odd} are dependent. As an equation
in~\eqref{eqn.odd}  corresponds to a face of a tetrahedron $\Delta$, a choice of two
equations 
in~\eqref{eqn.odd} is characterized by a common edge $e_\Delta$ of the two
corresponding faces. Choosing an edge $e_\Delta$ for every tetrahedron 
$\Delta$, we create a $2N \times 2N$ matrix $F_c$, called a face-matrix, 
so that the face-equations for $\theta$ take the form
\be
\label{eqn.F}
F_c \, \theta =0  \, .
\ee
Note that $F_c$ is a trimmed version of the $4N \times 2N$ matrix of
Equation~\eqref{F0123} and that entries of $F_c$ are linear forms on $c$
(in fact, the nonzero entries at up to sign, equal the value of $c$ on an edge,
see Equation~\eqref{FD}).

%% see: notes/note\_face\_matrix.pdf. 

\subsection{1-loop invariant}
\label{sub.C2torsion}

We now have all the ingredients to define the 1-loop invariant.

\begin{definition}
For a Ptolemy assignment $c$ on $\calT$ we define the 1-loop invariant by
\be
\label{delta2}
\delta_{\calT,c,2}:= 
\frac{1}{c_1 \cdots c_N} \left(\prod_{\Delta} \frac{1}{c(e_\Delta)} \right)
\det F_c \, .
\ee
\end{definition}
Note that  $\delta_{\calT,c,2}$ is a degree 0-function of $c$, i.e., invariant under
scaling each $c_i$ to $k  c_i$ for  $k \in \BC^\ast$. 
It turns out that  $\delta_{\calT,c,2}$ does not depend on the choice of
edge $e_\Delta$ and is invariant under 2--3 Pachner moves. This follows from
the specialization of Lemma~\ref{lem.welld} and Theorem~\ref{thm.23move} at $t=1$ 
below.

We conjecture that the 1-loop invariant $\delta_{\calT,c,2}$ agrees with the
$\BC^2$-torsion $\tau_{M,\rho,2}$ of $\rho$ (also called the Reidemeister torsion
associated to $\rho$) up to sign, where $\rho:\pi_1(M)\rightarrow \SL_2(\BC)$ is
a representation associated to the Ptolemy assignment $c$.
This is the specialization at $t=1$ of Conjecture~\ref{conj.dt} below. 

\subsection{1-loop polynomial}
\label{sub.poly}

In this section we upgrade our 1-loop invariant to the 1-loop polynomial. 

We assume that $M$ has an infinite cyclic cover $\widetilde{M}$ and denote
by $\widetilde{\calT}$ the ideal triangulation of $\widetilde{M}$ induced from $\calT$.
 We identify the deck transformation group of $\widetilde{M}$ with $\{ t^k \, | 
\, k \in \BZ\}$ for a formal variable $t$ and choose a lift of every cell of $\calT$ to
$\widetilde{\calT}$. Then every cell of $\widetilde{\calT}$ is uniquely represented by 
a cell of $\calT$ with  a monomial in $t$; for instance, a face of $\widetilde{\calT}$
is represented by $t^k \cdot f$ for a face $f$ of $\calT$ and $k \in \BZ$.
%Let $\ti M$ be an infinite cyclic cover of $M$ with deck transformation group
%$\BZ$. We choose a fundamental domain of $M$ in $\ti M$ and denote by $\widetilde{x}$
%the lift of an object $x$ in $M$ to the fundamental domain. Let us briefly recall how
%we twist Thurston's gluing equations in~\cite{GY21}. As we go around an edge
%$\ti e$ of $\ti M$, we do not encounter the tetrahedra that go around $e$, but
%instead $\BZ$-lifts of them. Hence the lifted gluing equations involve not only
%natural numbers, but instead natural numbers times monomials in a formal variable
%$t$ (representing a generator of the deck transformation group $\BZ$). The same
%happens for the face-equations. 
Recall that a face-equation is of the form
\be
\label{eqn.face}
c_{\alpha} \, \theta (f_0) + c_{\beta} \, \theta (f_1) + c_{\gamma} \, \theta (f_2) = 0 
\ee
where $f_0, f_1$, and $f_2$ are three faces of a tetrahedron $\Delta$. Since the lift 
of $\Delta$ has three faces $t^{k_i} \cdot \widetilde{f}_i$  for some $k_i  \in \BZ$
($i=0,1,2$), we can formally modify Equation~\eqref{eqn.face} as
\be
\label{eqn.twistface}
c_{\alpha} t^{k_0} \, \theta (f_0) + c_{\beta} t^{k_1}
\, \theta(f_1) + c_{\gamma} \, t^{k_2} \, \theta (f_2) = 0 \, . 
\ee
The effect of this insertion of monomials in $t$  leads to a twisted face-matrix
$F_c(t)$.

\begin{definition}
\label{def.delta2t}  
For a Ptolemy assignment $c$ on $\calT$  we define the 1-loop polynomial by
\be
\label{delta2t}
\delta_{\calT,c,2}(t):=
\frac{1}{c_1 \cdots c_N} \left(\prod_{\Delta} \frac{1}{c(e_\Delta)}
\right) \det F_c(t) \, .
\ee
\end{definition}
It is clear that  $\delta_{\calT,c,2}=\delta_{\calT,c,2}(1)$.  
In addition, $\delta_{\calT,c,2}(t)$ determines the 1-loop invariant
$\delta_{\calT^{(n)},c,2}$ of all cyclic $n$-covers $M^{(n)}$ of $M$. This follows
by arguments similar to the ones presented in~\cite{GY21} (for the 1-loop
polynomial $\delta_{\calT,c,3}(t)$) and will not be repeated here. 

\begin{lemma}
\label{lem.welld} 
The 1-loop polynomial $\delta_{\calT,c,2}(t)$ does not depend on the choice of edge
 $e_\Delta$.
\end{lemma}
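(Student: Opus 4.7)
The strategy is to combine multilinearity of the determinant with the linear dependencies among the four face-equations at a single tetrahedron, and to observe that these dependencies are preserved under $t$-twisting.

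Fix a tetrahedron $\Delta$ and denote its four rows in $F_{\Delta,c}$ by $R_0, R_1, R_2, R_3$. By Lemma~\ref{lem.twofour} and~\eqref{eqn.dep}, any three of these are linearly dependent; equivalently, $F_{\Delta,c}$ has rank $2$ on the Ptolemy locus (being skew-symmetric with Pfaffian equal to~\eqref{eqn.pt}), so the space of linear relations among the four rows is $2$-dimensional. Two explicit generators are $c_{01}R_1 + c_{02}R_2 + c_{03}R_3 = 0$ and an analogous relation obtained by omitting $R_1$ instead of $R_0$. These relations persist verbatim among the rows of the twisted matrix $F_c(t)$, because $F_c(t)$ differs from $F_c$ only by right-multiplication with the diagonal matrix $\diag(t^{k_j})$ indexed by the faces of $\calT$, and right-multiplication by a diagonal preserves all row relations.

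Given two choices $e_\Delta, e'_\Delta$ of the distinguished edge at $\Delta$, each selects two rows of $F_c(t)$ for $\Delta$, namely those indexed by the two faces of $\Delta$ containing the chosen edge. Both pairs form bases of the $2$-dimensional row space of $F_{\Delta,c}(t)$, so they are related by a $2\times 2$ change-of-basis matrix $M$. By multilinearity of $\det$ in the two rows from $\Delta$, the determinant $\det F_c(t)$ transforms by the factor $\det(M)$. It suffices to verify two representative cases, since any switch decomposes into a composition of such elementary moves: (i) when $e_\Delta$ and $e'_\Delta$ share a face of $\Delta$ (one row changes, using one relation), and (ii) when they are opposite edges of $\Delta$ (both rows change, using both relations). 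A direct computation using the relations, together with the Ptolemy equation $c_{01}c_{23} - c_{02}c_{13} + c_{03}c_{12} = 0$, gives $\det(M) = \pm\, c(e'_\Delta)/c(e_\Delta)$ in each case. This factor exactly cancels the change from $1/c(e_\Delta)$ to $1/c(e'_\Delta)$ in the prefactor of~\eqref{delta2t}, proving the invariance of $\delta_{\calT,c,2}(t)$ (up to a sign controlled by a consistent ordering convention on the two rows selected at each tetrahedron).

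The main subtlety is verifying that $\det(M)$ simplifies to $\pm\, c(e'_\Delta)/c(e_\Delta)$ rather than a more complicated rational function of the $c_{ij}$: this is precisely where the Ptolemy equation is essential. For instance, in the opposite-edges case $e_\Delta = e_{01}$ and $e'_\Delta = e_{23}$, the determinant of $M$ reduces to $(c_{02}c_{13} - c_{03}c_{12})/c_{01}^2$, which by the Ptolemy equation equals $c_{23}/c_{01} = c(e'_\Delta)/c(e_\Delta)$, exactly as required.
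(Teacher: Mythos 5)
Your proposal is correct and takes essentially the same approach as the paper: the paper compares two distinguished-edge choices for one tetrahedron via the explicit identity~\eqref{eqn.indep}, which exhibits the two $2\times 4$ row blocks as left-multiples whose determinant ratio equals $c(e_\Delta)/c(e'_\Delta)$ after invoking the Ptolemy relation, so the change cancels against the prefactor. It also notes, as you do, that the $t$-twist acts on a given tetrahedron's rows by right-multiplication with a diagonal matrix and therefore commutes with the left change-of-basis (one small imprecision in your write-up is that this diagonal is per-tetrahedron rather than global, but since the row relations and transition matrices are local to a single $\Delta$ this does not affect the argument).
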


\begin{proof} 
It suffices to compare two different edge-choices for one tetrahedron $\Delta$.
Comparing $e_\Delta = [0,1]$ and $e_\Delta=[0,2]$, we have
(cf. Equation~\eqref{eqn.dep})
\be
\label{eqn.indep}
\begin{pmatrix}
c_{02} & -c_{03} \\
0 & 1
\end{pmatrix}
\begin{pmatrix}
c_{13} & - c_{03} & 0 & c_{01} \\
c_{12} & - c_{02} & c_{01} & 0 
\end{pmatrix}
=\begin{pmatrix}
c_{01} & 0 \\
0 & 1
\end{pmatrix}
\begin{pmatrix}
c_{23} & 0 & - c_{03} & c_{02} \\
c_{12} & - c_{02} & c_{01} & 0 
\end{pmatrix} \, .
\ee
The insertion of monomials in $t$ affects on both sides of~\eqref{eqn.indep}, but it
is given by multiplying the same diagonal matrix (with diagonal in monomials in $t$)
on the right. This proves that $\delta_{\calT,c,2}(t)$ is unchanged. 
\end{proof}

\begin{theorem}
\label{thm.23move}
The 1-loop polynomial  $\delta_{\calT,c,2}(t)$ is invariant under 2--3 Pachner moves. 
\end{theorem}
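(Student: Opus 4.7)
The plan is to verify the invariance by local computation on both sides of a 2--3 Pachner move, following the approach used in~\cite{GY21} for the companion polynomial $\delta_{\calT,c,3}(t)$, with the face-matrix $F_c(t)$ playing the role there taken by a Neumann--Zagier-type matrix.

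First I would fix local data. Label the five vertices of the Pachner region $v_0, \ldots, v_4$ so that the two-tet side consists of $\Delta_1 = v_0v_1v_2v_3$ and $\Delta_2 = v_4v_1v_2v_3$ glued along $f = v_1v_2v_3$, and the three-tet side consists of $\Delta_1', \Delta_2', \Delta_3'$ arranged cyclically around the new edge $e = v_0v_4$, with three new interior faces $f_1', f_2', f_3'$. Write $\calT'$ for the modified triangulation. The two sides differ in: one new edge-variable $c_e$; the single interior-face variable $\theta_f$ is traded for three new variables $\theta_{f_i'}$; and one extra tetrahedron contributes two more rows. Outside this region the combinatorics is unchanged, and in particular the six ``boundary'' faces of the bipyramidal region are common to both.

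Next, I would rearrange both face-matrices so the rows and columns affected by the move form a local block. By Lemma~\ref{lem.welld}, I may choose the distinguished edges $e_\Delta$ freely; on the three-tet side I would choose each $e_{\Delta_i'}$ to contain $e$, so the two face-equations selected at $\Delta_i'$ involve the new interior variables $\theta_{f_j'}$ maximally, and analogously on the two-tet side I would choose $e_{\Delta_1}, e_{\Delta_2}$ to expose $\theta_f$ in all four local rows. A standard row/column permutation then displays the shared outer block identically on both sides, and after Schur-complementing it away the invariance claim reduces to a local identity of the form
\[
\det F_{c'}(t)\big|_{\text{local}} \;=\; c_e \cdot \frac{\prod_{i=1}^3 c'(e_{\Delta_i'})}{\prod_{j=1}^2 c(e_{\Delta_j})} \cdot \det F_{c}(t)\big|_{\text{local}}.
\]

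The crux of the argument is the local determinant comparison. The three Ptolemy equations for $\Delta_1', \Delta_2', \Delta_3'$ are each linear in $c_e$ and together determine $c_e$ as a rational function of the old edge-variables; correspondingly, the six new face-equations admit a Schur decomposition whose $3 \times 3$ reduced block reproduces the four original two-tet rows (modulo the single two-tet Ptolemy relation), and whose residual $3 \times 3$ block has determinant proportional to the normalization ratio above. The main obstacle I expect is bookkeeping the monomial $t$-weights carried by each lift of a face to $\widetilde{\calT}$. Under the Pachner move, the $t$-labels on the new tet, new edge, and new faces must be chosen compatibly with those already present, and the cocycle condition around the new edge $e$ should force the $t$-weighted diagonal rescalings to cancel between numerator and denominator. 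A useful reduction is to first establish the untwisted case $t=1$ (the invariance of $\delta_{\calT,c,2}$), and then to verify that turning on $t$ only multiplies rows and columns of the local block by monomials whose total contribution to the determinant ratio is $1$.
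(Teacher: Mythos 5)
Your proposal is essentially the same as the paper's proof: a local comparison of face-matrix determinants under the 2--3 move by elementary row operations (which you phrase as a Schur complement on the interior-face block), followed by a check that the normalization by $c(e_\Delta)$ and edge-variables cancels, and that the $t$-twisting commutes with the elimination. The paper differs only in that it fixes the equatorial edges $e_\Delta = [2,3],[3,4],[2,4]$ on the three-tet side and $[2,4]$ on both tetrahedra of the two-tet side, rather than edges through the new vertex pair, and carries the elimination out explicitly; both choices are admissible by Lemma~\ref{lem.welld}, and your conjectured local ratio is consistent with the paper's computed identity $\det F_{\calT,c} = \frac{c_{24}}{c_{01}c_{23}c_{34}}\det F_{\calT',c}$ (your parenthetical ``modulo the two-tet Ptolemy relation'' is in fact unnecessary --- the row identities hold without it).
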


\begin{proof}
Suppose that $\calT$ has two tetrahedra $[0,2,3,4]$ and $[1,2,3,4]$ with a common
face $[2,3,4]$ as in Figure~\ref{fig.pachner}. Let $\calT'$  denote the ideal
triangulation obtained by replacing these two tetrahedra by $[0,1,2,3]$,
$[0,1,3,4]$, and $[0,1,2,4]$.
  
\begin{figure}[htpb!]
%% Creator: Inkscape 1.0.2 (e86c8708, 2021-01-15), www.inkscape.org
%% PDF/EPS/PS + LaTeX output extension by Johan Engelen, 2010
%% Accompanies image file 'pachner.pdf' (pdf, eps, ps)
%%
%% To include the image in your LaTeX document, write
%%   \input{<filename>.pdf_tex}
%%  instead of
%%   \includegraphics{<filename>.pdf}
%% To scale the image, write
%%   \def\svgwidth{<desired width>}
%%   \input{<filename>.pdf_tex}
%%  instead of
%%   \includegraphics[width=<desired width>]{<filename>.pdf}
%%
%% Images with a different path to the parent latex file can
%% be accessed with the `import' package (which may need to be
%% installed) using
%%   \usepackage{import}
%% in the preamble, and then including the image with
%%   \import{<path to file>}{<filename>.pdf_tex}
%% Alternatively, one can specify
%%   \graphicspath{{<path to file>/}}
%% 
%% For more information, please see info/svg-inkscape on CTAN:
%%   http://tug.ctan.org/tex-archive/info/svg-inkscape
%%
\begingroup%
  \makeatletter%
  \providecommand\color[2][]{%
    \errmessage{(Inkscape) Color is used for the text in Inkscape, but the package 'color.sty' is not loaded}%
    \renewcommand\color[2][]{}%
  }%
  \providecommand\transparent[1]{%
    \errmessage{(Inkscape) Transparency is used (non-zero) for the text in Inkscape, but the package 'transparent.sty' is not loaded}%
    \renewcommand\transparent[1]{}%
  }%
  \providecommand\rotatebox[2]{#2}%
  \newcommand*\fsize{\dimexpr\f@size pt\relax}%
  \newcommand*\lineheight[1]{\fontsize{\fsize}{#1\fsize}\selectfont}%
  \ifx\svgwidth\undefined%
    \setlength{\unitlength}{136.81011651bp}%
    \ifx\svgscale\undefined%
      \relax%
    \else%
      \setlength{\unitlength}{\unitlength * \real{\svgscale}}%
    \fi%
  \else%
    \setlength{\unitlength}{\svgwidth}%
  \fi%
  \global\let\svgwidth\undefined%
  \global\let\svgscale\undefined%
  \makeatother%
  \begin{picture}(1,1.2683596)%
    \lineheight{1}%
    \setlength\tabcolsep{0pt}%
    \put(0.49321375,0.00116757){\makebox(0,0)[lt]{\lineheight{1.25}\smash{\begin{tabular}[t]{l}$0$\end{tabular}}}}%
    \put(0,0){\includegraphics[width=\unitlength,page=1]{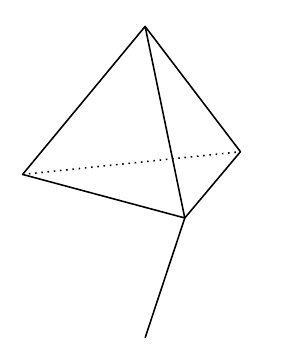}}%
    \put(0.49207883,1.21539757){\makebox(0,0)[lt]{\lineheight{1.25}\smash{\begin{tabular}[t]{l}$1$\end{tabular}}}}%
    \put(0.00575528,0.63509476){\makebox(0,0)[lt]{\lineheight{1.25}\smash{\begin{tabular}[t]{l}$2$\end{tabular}}}}%
    \put(0.64798416,0.57107139){\makebox(0,0)[lt]{\lineheight{1.25}\smash{\begin{tabular}[t]{l}$3$\end{tabular}}}}%
    \put(0,0){\includegraphics[width=\unitlength,page=2]{pachner.pdf}}%
    \put(0.87337507,0.7303546){\makebox(0,0)[lt]{\lineheight{1.25}\smash{\begin{tabular}[t]{l}$4$\end{tabular}}}}%
  \end{picture}%
\endgroup%

\caption{A 2--3 Pachner move.}
\label{fig.pachner}
\end{figure}

Recall that the face equation of a face $[i,j,k]$ of a tetrahedron $[i,j,k,l]$
with $i<j<k$ is given by
\be
E^l_{ijk}  : c_{ij} \theta_{ijl}-c_{ik} \theta_{ikl}+ c_{jk} \theta_{jkl} = 0 \, . 
\ee
For $\Delta=[0,2,3,4]$ and $[1,2,3,4]$ we choose $e_\Delta=[2,4]$. Then the
face-matrix $F_{\calT,c}$ of $\calT$ contains a submatrix
\be
\begin{tabular}{c|c|c|c|c|c|c}
& $\theta_{234}$ & others\\
\hline
$E_{024}^3$ & $c_{24}$   & $R_{024}^3$ \\[3pt]
$E_{234}^0$ &    & $R_{234}^0$ \\[3pt]
$E_{124}^3$ &  $c_{24}$ & $R_{124}^3$ \\[3pt]
$E_{234}^1$ &  & $R_{234}^1$
\end{tabular}
\ee
where $R^{l}_{ijk}$ is the row of $E^{l}_{ijk}$ except the $\theta_{234}$-entry.
Using an elementary row operation, we can modify the face-matrix without
changing its determinant as
\be
\label{eqn.table1}
\begin{tabular}{c|c|c|c|c|c|c}
& $\theta_{234}$ & others\\
\hline
$E_{024}^3$ &    & $R_{024}^3-R_{124}^3$ \\[3pt]
$E_{234}^0$ &    & $R_{234}^0$ \\[3pt]
$E_{124}^3$ &  $c_{24}$ & $R_{124}^3$ \\[3pt]
$E_{234}^1$ &  & $R_{234}^1$
\end{tabular}
\ee
For $\Delta=[0,1,2,3]$ (resp., $[0,1,3,4]$ and $[0,1,2,4]$) we choose
$e_\Delta = [2,3]$ (resp., $[3,4]$ and $[2,4]$). Then the face-matrix $F_{\calT',c}$
of $\calT'$ contains a submatrix 
\be
\begin{tabular}{c|c|c|c|c|c|c}
& $\theta_{012}$ & $\theta_{013}$ & $\theta_{014}$ & others\\
\hline
$E_{123}^0$ & $c_{12}$ & $-c_{13}$ &   & $R_{123}^0$ \\[3pt]
$E_{023}^1$ & $c_{02}$ & $-c_{03}$ &   & $R_{023}^1$ \\[3pt]
$E_{134}^0$ &   & $c_{13}$ &  $-c_{14}$ & $R_{134}^0$ \\[3pt]
$E_{034}^1$ &  & $c_{03}$ & $-c_{04}$   & $R_{034}^1$ \\[3pt]
$E_{124}^0$ & $c_{12}$ &  & $-c_{14}$  & $R_{124}^0$ \\[3pt]
$E_{024}^1$ & $c_{02}$ & & $-c_{04}$   & $R_{024}^1$
\end{tabular}
\ee
Preserving the determinant, we apply elementary row operations to obtain:
\be
\label{eqn.table2}
\begin{tabular}{c|c|c|c|c|c|c}
& $\theta_{012}$ & $\theta_{013}$ & $\theta_{014}$ & other 6 faces\\
\hline
$E_{123}^0$ &  &  &   & $R_{123}^0 - \frac{c_{12}}{c_{02}}R_{023}^1
- \frac{c_{23}}{c_{02}} \frac{c_{04}}{c_{34}} R_{134}^0+\frac{c_{23}}{c_{02}}
\frac{c_{14}}{c_{34}}R_{034}^1$ \\[3pt]
$E_{023}^1$ & $c_{02}$ & $-c_{03}$ &   & $R_{023}^1$ \\[3pt]
$E_{134}^0$ &   & $-\frac{c_{34}}{c_{04}} c_{01}$ &   &
$R_{134}^0-\frac{c_{14}}{c_{04}}R_{034}^1$ \\[3pt]
$E_{034}^1$ &  & $c_{03}$ & $-c_{04}$   & $R_{034}^1$ \\[3pt]
$E_{124}^0$ &   &   &   & $R_{124}^0-R_{134}^0-R_{123}^0$ \\[3pt]
$E_{024}^1$ &   &  &   & $R_{024}^1-R_{034}^1-R_{023}^1$
\end{tabular}
\ee
On the other hand, one computes that
\be
\begin{aligned}
R_{234}^0&= R_{124}^0-R_{134}^0-R_{123}^0   \\
R_{234}^1&=R_{024}^1-R_{034}^1-R_{023}^1 \\
R^3_{024}-R^3_{124} &= \frac{c_{02}} {c_{23}}	R_{123}^0
- \frac{c_{12}}{c_{23}}R_{023}^1
-  \frac{c_{04}}{c_{34}} R_{134}^0+ \frac{c_{14}}{c_{34}}R_{034}^1 
\end{aligned}
\ee
It follows that \eqref{eqn.table2} is equal to
\be
\label{eqn.table3}
\begin{tabular}{c|c|c|c|c|c|c}
& $\theta_{012}$ & $\theta_{013}$ & $\theta_{014}$ & others\\
\hline
$E_{123}^0$ &  &  &   & $\frac{c_{23}}{c_{02}} (R^3_{024}-R^3_{124})$\\[3pt]
$E_{023}^1$ & $c_{02}$ & $-c_{03}$ &   & $R_{023}^1$ \\[3pt]
$E_{134}^0$ &   & $-\frac{c_{34}}{c_{04}} c_{01}$ &   &
$R_{134}^0-\frac{c_{14}}{c_{04}}R_{034}^1$ \\[3pt]
$E_{034}^1$ &  & $c_{03}$ & $-c_{04}$   & $R_{034}^1$ \\[3pt]
$E_{124}^0$ &   &   &   & $R_{234}^0$ \\[3pt]
$E_{024}^1$ &   &  &   & $R_{234}^1$
\end{tabular}
\ee
Comparing~\eqref{eqn.table1} and~\eqref{eqn.table3}, we have
\be
 \det F_{\calT,c} = \frac{c_{24}}{(c_{01}c_{23}c_{34})}\det F_{\calT',c}\,.
\ee
One easily checks that the monomial factor in the right-hand side agrees with the
difference coming from the monomial term
$c_1\cdots c_N \prod c(e_\Delta)$ in~\eqref{delta2}. This proves that
$\delta_{\calT,c,2} = \delta_{\calT',c,2}$. As the effect of the insertion of monomials
in $t$ is separated from the above computation, this also proves that
$\delta_{\calT,c,2}(t) = \delta_{\calT',c,2}(t)$.
\end{proof}

\begin{remark}
\label{rem.ptolemy23}
The proof of the above theorem contains the behavior of the Ptolemy variety under
Pachner 2--3 moves. This can be used to show that the determinant of the Jacobian
of the Ptolemy variety, suitable normalized, is invariant under 2--3 Pachner moves,
and conjecturally equal to the 1-loop invariant defined in~\cite{DG1};
see~\cite{Yoon:ptolemy}.
\end{remark}

\begin{theorem}
	\label{thm.lift}
	A Ptolemy assignment $c$ on $\calT$ lifts to a super-Ptolemy assignment
	$(c,\theta)$ with $\th \neq 0$ if and only if
	$\delta_{\calT,c,2}(1)=0$.
\end{theorem}
\begin{proof}
	It is clear that if $\delta_{\calT,c,2}(1) \neq 0$, or equivalently, if $F_c = F_c(1)$ is non-singular,
	then $\theta$ should be zero. Conversely, if $\delta_{\calT,c,2}(1)=0$, then there is a
	nonzero vector $v  \in \BC^{2N}$ with $F_c\, v=0$, and  for any $\eta \neq 0 \in
	\BC$ the pair $(c, \eta v)$ is a super-Ptolemy assignment.
\end{proof}

\begin{conjecture}
\label{conj.dt}
The 1-loop polynomial  $\delta_{\calT,c,2}(t)$ agrees with the $\BC^2$-torsion
polynomial $\tau_{M,\rho,2}(t)$ of $\rho$ up to multiplying signs and monomials in
$t$. Here $\rho: \pi_1(M)\rightarrow \SL_2(\BC)$ is a representation associated
to the Ptolemy assignment $c$.
\end{conjecture}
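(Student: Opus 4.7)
The plan is to follow the blueprint of~\cite{GY21}, where the analogous statement for the 1-loop polynomial $\delta_{\calT,c,3}(t)$ and the $\BC^3$-torsion polynomial was addressed. The main idea is to realize $\tau_{M,\rho,2}(t)$ as a determinant in the twisted cellular chain complex of the infinite cyclic cover, and then to identify this determinant with $\det F_c(t)$ up to the normalization monomials appearing in~\eqref{delta2t}.

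I would first set up the twisted cellular chain complex of the truncated cyclic cover $\widetilde{\Tdot}$ with coefficients in $\BC^2$, twisted by $\rho$ and by the deck transformation acting as $t$. The natural $(\SL_2(\BC),N_2)$-cocycle $\varphi^\sharp$ obtained as the body of the $\osp_{2|1}(\BC)$-cocycle of Figure~\ref{fig.p2c} specifies an explicit trivialization of this local system on the 1-skeleton and hence an explicit basis of each chain module over $\BC[t,t^{-1}]$. In this basis, the $\BC^2$-torsion polynomial is the alternating product of determinants of the twisted boundary operators, up to signs and monomials in $t$.

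Next, I would identify $F_c(t)$ with the restriction of the relevant boundary operator to the long-edge/hexagonal-face subcomplex. Lemmas~\ref{lem.hexagon} and~\ref{lem.tetrahedron} show that the face-equations~\eqref{eqn.odd} are precisely the cocycle conditions at the triangular faces of $\Tdot$, read off from the odd entries of the matrix products, and Lemma~\ref{lem.twofour} shows that these are the first-order deformations of the $\SL_2(\BC)$-cocycle into $\osp_{2|1}(\BC)$; passing to the twisted setting~\eqref{eqn.twistface} inserts the $t$-factors coming from the deck action on the cover. The row-trimming specified by the edge choice $e_\Delta$ implements the linear dependence~\eqref{eqn.dep}, removing the one redundant relation per tetrahedron. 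Standard Reidemeister-torsion reductions then collapse the acyclic subcomplexes generated by the vertex-link triangles of $\Tdot$ (i.e., the boundary tori) and by the short edges; what remains is a two-term complex whose unique nontrivial boundary operator is the matrix $F_c(t)$. The normalization prefactor $\frac{1}{c_1\cdots c_N}\prod_\Delta \frac{1}{c(e_\Delta)}$ in~\eqref{delta2t} should exactly match the change-of-basis determinants produced during this collapse; Lemma~\ref{lem.welld} and Theorem~\ref{thm.23move} provide partial checks of this matching, since both sides are known to be invariant under the same choices and moves.

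The main obstacles are twofold. First, the precise tracking of signs and monomials in $t$ through the collapse is notoriously delicate for Reidemeister torsion, and is the reason the conjecture is formulated only up to this ambiguity; care will be needed to pin down conventions and ensure consistency between the two sides. Second, the boundary contribution from the cusps is subtle, because the peripheral representation is reducible and one must carefully define and match both the torsion and the face-matrix data along the triangulated boundary tori; here techniques from~\cite{GY21} and the cuspidal torsion literature should be adapted. As a guide and sanity check, the $4_1$ computation in Section~\ref{sub.41} (where $\det F$ recovers $2(m+m^{-1}-1)$, the known $\BC^2$-torsion of $4_1$) already illustrates the identification in the presence of the deformation parameter, and verification on further census examples should both refine the normalization bookkeeping and validate the overall argument.
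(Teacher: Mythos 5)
This statement is a \emph{conjecture} in the paper (Conjecture~\ref{conj.dt}), and the paper does not prove it. The evidence the authors offer is indirect: the 1-loop polynomial $\delta_{\calT,c,2}(t)$ is shown to be well-defined (Lemma~\ref{lem.welld}), invariant under 2--3 Pachner moves (Theorem~\ref{thm.23move}), and it matches the known $\BC^2$-torsion polynomial in the $4_1$ example; the definition is further motivated by the analogy with the 1-loop polynomial $\delta_{\calT,c,3}(t)$ of~\cite{GY21}, which is itself conjectural. There is no Reidemeister-torsion computation in the paper identifying $\det F_c(t)$ with a boundary map in a twisted chain complex of $\Tdot$.

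Your proposal is a plausible \emph{blueprint} for an eventual proof, but it is not a proof, and you acknowledge this yourself by listing the two main obstacles as open. Concretely, the step ``Standard Reidemeister-torsion reductions then collapse the acyclic subcomplexes generated by the vertex-link triangles... and by the short edges; what remains is a two-term complex whose unique nontrivial boundary operator is the matrix $F_c(t)$'' is precisely the content that would need to be established, and it is nontrivial: the twisted chain complex of $\Tdot$ with $\BC^2$-coefficients has cells in degrees $0$ through $3$, the peripheral subcomplex contributes because $\rho$ restricted to $\pi_1(\partial M)$ is reducible, and the claim that after collapsing one is left with a single $2N\times 2N$ boundary map indexed by tetrahedra and faces is exactly what would prove the conjecture; asserting it does not establish it. Likewise, the claim that the prefactor $\frac{1}{c_1\cdots c_N}\prod_\Delta\frac{1}{c(e_\Delta)}$ ``should exactly match the change-of-basis determinants'' needs to be derived, not guessed, and the consistency checks (Lemma~\ref{lem.welld}, Theorem~\ref{thm.23move}) only show both sides transform the same way under certain moves; they do not fix the normalization. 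Until these identifications are carried out, the statement remains what the paper says it is: a conjecture.
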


\subsection{$(m,l)$-deformation}

In this section, we deform the 1-loop invariant as well as the 1-loop polynomial.

We fix a cocycle $\sigma$ that assigns a non-zero complex number to each
short edge of $\Tdot$. Recall Equation~\eqref{eqn.sood} that the face-equations
in~\eqref{eqn.odd} admit a deformation according to $\sigma$.
As in Section~\ref{sub.face}, we choose an edge $e_\Delta$ for each tetrahedron 
$\Delta$ of $\calT$ so that we can choose two face-equations in~\eqref{eqn.sood}.
We then create a $2N \times 2N$ matrix $F^\sigma_c$ so that the chosen 
face-equations for $\theta=(\th_1,\dots,\th_{2N})^t$ take the form
\be
F^\sigma_c \, \theta =0 \, .
\ee

\begin{definition}
For a $\sigma$-deformed Ptolemy assignment $c$ on $\calT$
we define the 1-loop invariant by
\be
\label{eqn.defn2}
\delta_{\calT,c,2}:=\frac{1}{c_1 \cdots c_N}
\left( \prod_{\Delta} \frac{1}{c^\sigma(e_\Delta)} \right) \det F^\sigma_c
\ee
where $c^\sigma(e_\Delta)$ is the value of $c$ on the edge $e_\Delta$
times its $\sigma$-coefficient in~\eqref{eqn.sood}:
\be
\begin{tabular}{c|c|c|c|c|c|c}
$e_{\Delta}$ & [0,1] & [0,2] & [0,3] & [1,2] & [1,3] & [2,3] \\
\hline
$c^\sigma(e_\Delta)$ & $\frac{\sigma^0_{21}}{\sigma^3_{12}} c_{01}$ &
$c_{02}$ &
$\frac{\sigma^3_{01}}{\sigma^2_{01}} c_{03}$ & 
$\frac{\sigma^1_{23}}{\sigma^0_{23}} c_{12}$  &
$c_{13}$ &
$\frac{\sigma^{1}_{03}}{\sigma^2_{03}} c_{23}$ 
\end{tabular}
\ee
\end{definition}

As explained in Section~\ref{sub.poly}, if an infinite cyclic cover of $M$ is given,
we twist the matrix $F_c^\sigma$ to obtain $F_c^\sigma(t)$ by inserting monomials in $t$.

\begin{definition}
For a $\sigma$-deformed Ptolemy assignment $c$ on $\calT$
we define the 1-loop polynomial as
\be
\delta_{\calT,c,2}(t):=\frac{1}{c_1 \cdots c_N}
\left( \prod_{\Delta} \frac{1}{c^\sigma(e_\Delta)} \right) \det F^\sigma_c(t) \, .
\ee
\end{definition}
%and define
%\be
%\delta^\sigma_{\calT,2}(t) :
%P^\sigma_2(\calT) \rightarrow \BC[t^{\pm1}]/\{\pm t^\BZ\}, \quad
%c \mapsto \delta_{\calT,c,2}^\sigma(t):=
%\frac{1}{c_1 \cdots c_N} \left(\prod_{\Delta} \frac{1}{c^\sigma(e_\Delta)} \right)
%\det F^\sigma_c(t) \, .
%\ee

Repeating the same computation given in Sections~\ref{sub.C2torsion}
and~\ref{sub.poly}, we obtain:

\noindent
1. A $\sigma$-deformed Ptolemy assignment $c$ on $\calT$ lifts to a
super-Ptolemy assignment $(c,\theta)$ with $\th \neq 0$
if and only if $\delta_{\calT,c,2}=0$.

\noindent
2. The 1-loop polynomial $\delta_{\calT,c, 2}(t)$  does not depend on the choice
of edge $e_\Delta$ and is invariant under 2--3 Pachner moves up to scalar
multiplication by non-zero complex numbers.

\begin{conjecture}
\label{conj.1}
The 1-loop polynomial  $\delta_{\calT,c,2}(t)$ agrees with the
$\BC^2$-torsion polynomial $\tau_{M,\rho,2}(t)$ of $\rho$ up to multiplying non-zero
complex numbers and monomials in $t$. Here  $\rho : \pi_1(M)\rightarrow \SL_2(\BC)$
is a representation associated to the $\sigma$-deformed Ptolemy assignment $c$.
\end{conjecture}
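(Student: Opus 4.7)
The plan is to identify the face-matrix $F^\sigma_c(t)$ with the essential differential in a twisted cellular chain complex of the truncated triangulation $\Tdot$ with coefficients in the fundamental representation $V_\rho = \BC^2$ (tensored with the local system $\BC[t,t^{-1}]$ coming from the infinite cyclic cover), and then invoke the standard presentation of the $\BC^2$-torsion polynomial as an alternating product of determinants of boundary maps.

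First, I would make the super-geometric picture concrete: by Remark~\ref{rmk.oneG}, working over the Grassmann algebra $\BC[\epsilon]/(\epsilon^2)$, the group $\osp_{2|1}(\BC)$ collapses to the semi-direct product $\SL_2(\BC) \ltimes \BC^2$. Hence a ($\sigma$-deformed) super-Ptolemy assignment $(c,\theta)$ encodes precisely a representation $\rho:\pi_1(M)\to \SL_2(\BC)$ together with an infinitesimal deformation $\xi$ in the direction of the fundamental module $V_\rho$. Concretely, the linear equations~\eqref{eqn.sood} for $\theta$ are exactly the cocycle equations in the twisted complex $C^*(\Tdot;V_\rho)$, written in the basis adapted to the natural cocycle of Section~\ref{sub.natural}; this is the interpretation that makes Theorem~\ref{thm.lift} natural from a Reidemeister-torsion perspective.

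Second, I would construct the twisted cellular chain complex $C_*(\Tdot; V_\rho\otimes\BC[t^{\pm 1}])$ associated with the infinite cyclic cover, using the CW structure of $\Tdot$: long edges, short edges, hexagonal 2-cells, triangular 2-cells, and tetrahedra. The torsion polynomial $\tau_{M,\rho,2}(t)$ is the alternating product of determinants of suitable square sub-blocks of the boundary maps in this complex. The key step is a reduction, directly mirroring the row and column manipulations in the proof of Theorem~\ref{thm.23move}, that collapses the complex: the triangular 2-cells pair off against the short edges at each ideal vertex (contributing only invertible monomial factors in $t$ controlled by $\sigma$), the 3-cells pair off against hexagonal 2-cells at each tetrahedron (using that any three of the equations~\eqref{eqn.sood} are linearly dependent), and what remains is a square block whose matrix is exactly $F^\sigma_c(t)$ up to rescaling rows and columns by the Ptolemy data. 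The normalizing prefactor $\frac{1}{c_1\cdots c_N}\bigl(\prod_{\Delta} c^\sigma(e_\Delta)\bigr)^{-1}$ is precisely the inverse of the total scalar produced by this rescaling, yielding the claimed equality up to units in $\BC[t^{\pm 1}]$.

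The main obstacle is the careful bookkeeping of bases and signs: $\tau_{M,\rho,2}(t)$ is defined only up to multiplication by nonzero complex numbers and monomials in $t$ (from cell orientations and from the choice of lifts to $\widetilde{M}$), so each collapse step above must be shown compatible with the chosen bases, and the vertex and tetrahedron cancellations must be executed without introducing hidden $t$-dependence. A secondary issue is verifying acyclicity of the twisted complex over $\BC(t)$, which is equivalent to $\det F^\sigma_c(t)\not\equiv 0$ and should hold on a Zariski-open subset of the Ptolemy variety. Finally, the compatibility of the abelian $(m,\ell)$-twisting $\sigma$ with the twisted torsion construction should be modeled on the formalism of~\cite{Yoon:volume}, whose arguments adapt directly once the cellular reduction above is in place; the sanity check at $4_1$ in Section~\ref{sub.41}, where $\det F$ recovers $2(m+m^{-1}-1)$, already exhibits the expected match with Kitano's computation.
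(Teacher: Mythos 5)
The statement you are trying to prove is a \emph{conjecture} in the paper, not a theorem. The authors do not prove it; they define $\delta_{\calT,c,2}(t)$, establish that it is independent of the choice of edges $e_\Delta$ (Lemma~\ref{lem.welld}) and invariant under Pachner 2--3 moves (Theorem~\ref{thm.23move}), and verify the expected equality numerically for the $4_1$ knot. There is therefore no proof in the paper against which to compare your proposal.

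Your sketch is a plausible research program along the lines that have worked for related 1-loop conjectures, but as written it is not a proof, and its central step is unjustified. You assert that the face equations~\eqref{eqn.sood} are ``exactly the cocycle equations in the twisted complex $C^*(\Tdot;V_\rho)$,'' yet $\theta$ assigns a single scalar (an element of $\G_1(\BC)\cong\BC$ after fixing one odd generator) to each of the $2N$ faces, while a $\BC^2$-valued cochain on those same 2-cells has dimension $4N$, and the full twisted cellular chain complex of $\Tdot$ has other cells as well (triangles, long and short edges, 3-cells). To make your identification precise you must explain how the decoration trivializes one direction in $\BC^2$ over each face so that only a transverse scalar survives as $\theta$, carry out the collapse of the cellular chain complex to the single square $2N\times 2N$ block $F^\sigma_c(t)$ with full bookkeeping of bases, signs, and $t$-twisting, and establish acyclicity over $\BC(t)$ (equivalently $\det F^\sigma_c(t)\not\equiv 0$). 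Those steps are precisely the content of the conjecture, and you acknowledge rather than supply them; the $4_1$ computation you cite is the paper's own example and does not close the gap.
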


When $M$ is hyperbolic and the representation $\rho$ in the above conjecture
is a preferred lift of the geometric representation, then we simply denote the 1-loop
polynomial $\delta_{\calT,c,2}(t)$ by $\delta_{\calT,2}(t)$ and the $\BC^2$-torsion
polynomial $\tau_{M,\rho,2}(t)$ by $\tau_{M,2}(t)$. In this case,
Conjecture~\ref{conj.1} reads
\be
\delta_{\calT,2}(t)\overset{?}{=} \tau_{M,2}(t)
\ee
up to multiplying signs and monomials in $t$  (see Remark~\ref{rmk.reduce}
 below).
 
\begin{remark}
\label{rmk.reduce}
The scalar-multiplication ambiguity in Conjecture~\ref{conj.1} is given by some products
of complex numbers that $\sigma$ assigns to short edges. In particular, if $\sigma$
takes values in $\{\pm1\}$ (for instance, if $M$ is hyperbolic and $\rho$ is a lift
of the geometric representation), then we can replace this scalar-multiplication
ambiguity by sign-ambiguity as in Conjecture~\ref{conj.dt}.
\end{remark}

\subsection{Example: the $4_1$ knot continued}
\label{sub.41b}

In this section we verify Conjecture~\ref{conj.1} for the $4_1$ knot. With the
notation of Section~\ref{sub.41}, we have $e_{\Delta_1} =[0,3]$ and
$e_{\Delta_2}=[1,2]$, hence
\be
\begin{aligned}
& c^\sigma(e_{\Delta_1}) = \frac{\sigma(s_{10})^{-1}}{\sigma(s_{2})} c_{2}
= \frac{c_2}{m}, \qquad
& c^\sigma(e_{\Delta_2})= \frac{\sigma(s_{5})^{-1}}{\sigma(s_{6})} c_{2} = c_2
\end{aligned}
\ee
As explained in \cite[Sec.3.1]{GY21}, we insert monomials in $t$ to the matrix
in Equation~\eqref{eqn.exF} to obtain
\be
F^\sigma_c(t) =
\begin{pmatrix}
m^{-2} c_2 & c_1 & 0 & -m^{-1} c_2\\
c_1 & \ell^{-1} m^{-2} c_2 & -m^{-1} c_2 & 0 \\
-c_2 & c_1 & c_2 t& 0 \\
\ell c_1 & -c_2 & 0 & c_2  t
\end{pmatrix} \,.
\ee
Then we obtain
\be
\delta_{\calT,c,2}(t) =
\frac{1}{c_1 c_2} \frac{m}{c_2^2} \det F_c^\sigma(t) = 
m^{-1}  (t^2-2(m+m^{-1})t + 1)
\ee
which agrees with the $\BC^2$-torsion polynomial of $4_1$ up to $m^{-1}$.

\noindent
In particular, for $m=1$ (and $l=-1$) we obtain the $\BC^2$-torsion polynomial
\be
t^2-4t+1
\ee
for an $\SL_2(\BC)$-lift of the geometric representation of the
$4_1$ knot, in agreement with \texttt{SnapPy}  
\begin{lstlisting}
snappy.Manifold('4_1').hyperbolic_SLN_torsion(2)                             
a^2 - 4.0000000000000000000000000000*a + 0.99999999999999999999999999999
\end{lstlisting}

%%%%%%%%%%%%%%%%%%%%%%%%%%%%%%%%%%%%%%%%%%%%%%%%%%%%%%%%%%%%%%%%%%%%%%%%%%%%
%%%%%%%%%%%%%%%%%%%%%%%%%%%%%%%%%%%%%%%%%%%%%%%%%%%%%%%%%%%%%%%%%%%%%%%%%%%%

\section{Further discussion}
\label{sec.deltaodd}

In this paper and in our prior work~\cite{GY21}, we introduced 1-loop polynomials
$\delta_{\calT,2}(t)$ and $\delta_{\calT,3}(t)$ determined, respectively, by the
twisted face-matrix and twisted NZ-matrix of an ideal triangulation
$\calT$ of a 3-manifold $M$, and conjectured to be equal to the $\tau_{M,2}(t)$
and $\tau_{M,3}(t)$ torsion polynomials. 

In this section we explain briefly how to derive 1-loop polynomials that
conjecturally equal to the torsion polynomials $\tau_{M,n}(t)$ for all $n \geq 2$.
Recall that the latter are a sequence of polynomials for $n \geq 2$ associated to
a cusped hyperbolic 3-manifold, and determined by the homology of the infinite
cyclic cover of $M$ twisted by the $(n-1)$-rst symmetric power of an
$\SL_2(\BC)$-lift of the geometric representation. 

The torsion polynomials are closely related to the adjoint reprensentation of
$\PGL_n(\BC)$ which decomposes 
\be
\label{AdPGL}
\mathrm{Ad}(\PGL_n(\BC)) = \oplus_{i=1}^{n-1} \BC^{2 i +1}
\ee
into \emph{odd} dimensional representations of $\SL_2(\BC)$. This decomposition
is not special to $\PGL_n(\BC)$, indeed every complex semisimple Lie group $G$
has a canonical principal $\SL_2(\BC)$ subgroup, and decomposing the adjoint
representation of $G$ as an $\SL_2(\BC)$-representation
\be
\label{AdG}
\mathrm{Ad}(G) = \oplus_{i=1}^r \BC^{2 e_i +1}
\ee
one obtains only \emph{odd} dimensional irreducible representations of $\SL_2(\BC)$,
where $e_i$ are the exponents of $G$~\cite{Kostant}. Using the above
decomposition~\eqref{AdPGL}, one can define for each $n \geq 2$ the product
\be
\label{prodn}
\prod_{i=1}^{n-1} \tau_{M,{2i+1}}(t)
\ee
from which one can extract the odd torsion polynomials $\tau_{M,\text{odd}}(t)$,
see e.g.~\cite[Sec.5]{Porti2015}. 

Using the fact that $\PGL_n(\BC)$-representations can be described by gluing
equations associated to $\PGL_n(\BC)$-type Neumann--Zagier
matrices~\cite{Garoufalidis:gluing}, if one twists these matrices by considering
their lifts to an infinite cyclic cover as was done in~\cite{GY21}, one can define
a 1-loop polynomial $\delta_{\calT,\PGL_{n}}(t)$ which would factor as
$\prod_{i=1}^{n-1} \delta_{M,{2i+1}}(t)$ and would conjecturally equal
to the polynomial~\eqref{prodn}. Doing so, one can obtain the odd 1-loop polynomials
that conjecturally equal to the corresponding odd torsion polynomials.

Likewise, an extension of the decomposition~\eqref{AdG} to high-dimensional
orthosymplectic groups, together with a construction of Neumann--Zagier matrices
that describe representations of 3-manifold groups to orthosymplectic groups, along
with their twisted version would determine even 1-loop polynomials that conjecturally
equal to the corresponding even torsion polynomials.

\subsection*{Acknowledgments}

S.G. wishes to thank Nathan Dunfield for enlightening conversations. 
S.Y. wishes to thank  Teruaki Kitano and Joan Porti for helpful conversations.

%%%%%%%%%%%%%%%%%%%%%%%%%%%%%%%%%%%%%%%%%%%%%%%%%%%%%%%%%%%%%%%%%%%%%%%%%%%%
%%%%%%%%%%%%%%%%%%%%%%%%%%%%%%%%%%%%%%%%%%%%%%%%%%%%%%%%%%%%%%%%%%%%%%%%%%%%

%\bibliographystyle{plain}
\bibliographystyle{hamsalpha}
\bibliography{biblio}
\end{document}